\theoremstyle{plain}
\newtheorem{theorem}{Theorem}[section]
\newtheorem{lemma}[theorem]{Lemma}
\newtheorem{proposition}[theorem]{Proposition}
\newtheorem{corollary}[theorem]{Corollary}
\theoremstyle{definition}
\newtheorem{example}[theorem]{Example}
\begin{document}
\afterpage{\rhead[]{\thepage} \chead[W. A. Dudek]{Parastrophes of quasigroups } \lhead[\thepage]{} }

\begin{center}
\vspace*{2pt} {\Large \textbf{Parastrophes of quasigroups}}\\[30pt]
{\large \textsf{\emph{Wieslaw A. Dudek}}}
\\[30pt]
\end{center}
\noindent{\footnotesize \textbf{Abstract.} Parastrophes (conjugates) of a quasigroup can be divided into separate classes containing isotopic parastrophes. We prove that the number of such classes is always 1, 2, 3 or 6. Next we characterize quasigroups having a fixed number of such classes.} 

\footnote{\textsf{2010 Mathematics Subject Classification:} 20N05, 05B15 }
 \footnote{\textsf{Keywords:} quasigroup, isotopism, anti-isotopism, conjugate quasigroup, parastrophe.}

\section*{\centerline{1. Introduction}}\setcounter{section}{1}

Denote by $S_Q$ the set of all permutations of the set $Q$. We say that a quasigroup $(Q,\cdot)$ is {\em isotopic} to $(Q,\circ)$ if there are $\alpha,\beta,\gamma\in S_Q$ such that $\alpha(x)\circ\beta(y) = \gamma(x\cdot y)$ for all $x,y\in Q$. The triplet $(\alpha,\beta,\gamma)$ is called an {\em isotopism}. 
Quasigroups $(Q,\cdot)$ and $(Q,\circ)$ for which there are $\alpha,\beta,\gamma\in S_Q$ such that $\alpha(x)\circ\beta(y) = \gamma(y\cdot x)$ for all $x,y\in Q$ are called {\em anti-isotopic}. This fact is denoted by $(Q,\cdot)\sim(Q,\circ)$. In the case when $(Q,\cdot)$ and $(Q,\circ)$ are isotopic we write $(Q,\cdot)\approx (Q,\circ)$. It is clear that the relation $\approx$ is an equivalence and divides all quasigroups into disjoint classes containing isotopic quasigroups.

Each quasigroup $Q=(Q,\cdot)$ determines five new quasigroups $Q_i=(Q,\circ_i)$ with the operations $\circ_i$ defined as follows:
$$
\begin{array}{cccc}
x\circ_1 y=z\longleftrightarrow x\cdot z=y\\
x\circ_2 y=z\longleftrightarrow z\cdot y=x\\
x\circ_3 y=z\longleftrightarrow z\cdot x=y\\
x\circ_4 y=z\longleftrightarrow y\cdot z=x\\
x\circ_5 y=z\longleftrightarrow y\cdot x=z\\
\end{array}
$$
Such defined (not necessarily distinct) quasigroups are called {\em parastrophes} or {\em conjugates} of $Q$. Traditionally they are denoted as

\medskip\centerline{ 
$Q_1=Q^{-1}=(Q,\backslash), \ \ Q_2={}^{-1}\!Q=(Q,/), \ \ Q_3= {}^{-1}\!(Q^{-1})=(Q_1)_2$,}

\medskip\centerline{$Q_4=({}^{-1}\!Q)^{-1}=(Q_2)_1$ \ and \ $Q_5=({}^{-1}\!(Q^{-1}))^{-1}=((Q_1)_2)_1=((Q_2)_1)_2$.}

\medskip
Each parastrophe $Q_i$ can be obtained from $Q$ by the permutation $\sigma_i$, where $\sigma_1=(23)$, $\sigma_2=(13)$, $\sigma_3=(132)$, $\sigma_4=(123)$, $\sigma_5=(12)$.

Generally, parastrophes $Q_i$ do not save properties of $Q$. Parastrophes of a group are not a group, but parastrophes of an idempotent quasigroup also are idempotent quasigroups. Moreover, in some cases (described in \cite{Lind}) parastrophes of a given quasigroup $Q$ are pairwise equal or all are pairwise distinct (see also \cite{BPop} and \cite{Pop}). In \cite{Lind} it is proved that the number of distinct parastrophes of a quasigroup is always a divisor of $6$ and does not depend on the number of elements of a quasigroup.

Parastrophes of each quasigroup can be divided into separate classes containing isotopic parastrophes. We prove that the number of such classes is always $1$, $2$, $3$ or $6$. The number of such classes depends on the existence of an anti-isotopism of a quasigroup and some parastrophe of it.

\section*{\centerline{2. Classification of parastrophes}}\setcounter{section}{2}

As it is known (see for example \cite{Bel}) a quasigroup $(Q,\cdot)$ can be considered as an algebra $(Q,\cdot,\backslash,/)$ with three binary operations satisfying the following axioms
$$
x(x\backslash z)=z, \ \ \ (z/y)y=z, \ \ \ x\backslash xy=y, \ \ \ xy/y=x,
$$
where 
$$
x\backslash z=y\longleftrightarrow xy=z \ \ \ {\rm and} \ \ \ z/y=x\longleftrightarrow xy=z.
$$

We will use these axioms to show the relationship between parastrophes.
But let's start with the following simple observation.

\begin{lemma}\label{L-21}
Let $Q$ be a quasigroup. Then

\smallskip
$(a)$ \ $xy=y\circ_5x$, \ $x\circ_1 y=y\circ_3 x$, \ $x\circ_2 y=y\circ_4 x$,\vspace*{1mm}

$(b)$ \ $Q\sim Q_5$, \ $Q_1\sim Q_3$, \ $Q_2\sim Q_4$,\vspace*{1mm}

$(c)$ \ $xy=yx\longleftrightarrow Q=Q_5\longleftrightarrow Q_1=Q_3\longleftrightarrow Q_2=Q_4$,\vspace*{1mm}

$(d)$ \ $Q_1=Q\longleftrightarrow Q_2=Q_3\longleftrightarrow Q_4=Q_5$,\vspace*{1mm}

$(e)$ \ $Q_2=Q\longleftrightarrow Q_1=Q_4\longleftrightarrow Q_3=Q_5$.
\end{lemma}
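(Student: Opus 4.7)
The plan is to reduce each claim to the defining equivalences for the parastrophe operations $\circ_i$ and, where useful, to the quasigroup axioms $x(x\backslash z) = z$, $(z/y)y = z$, $x\backslash(xy) = y$, $(xy)/y = x$ stated just before the lemma.

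Part~(a) is pure bookkeeping: each identity has the form $x\circ_i y = y\circ_j x$, and one verifies it by using the defining equivalence for $\circ_i$ to determine what element $x\circ_i y$ is, then doing the same for $y\circ_j x$, and observing that both sides are characterised by the same equation in $(Q,\cdot)$. For example, $y\circ_5 x$ is by definition the unique $z$ with $x\cdot y=z$, so $y\circ_5 x = xy$. Part~(b) is then immediate: each identity of~(a) asserts exactly that $(\mathrm{id},\mathrm{id},\mathrm{id})$ is an anti-isotopism between the corresponding pair of parastrophes, matching the template $\alpha(x)\circ\beta(y)=\gamma(y\cdot x)$ with trivial maps.

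Part~(c) splits into three equivalences. The first, $xy = yx \iff Q = Q_5$, is direct from $y\circ_5 x = xy$. For $Q_1 = Q_3$, I would combine this hypothesised equality of operations with the identity $x\circ_1 y = y\circ_3 x$ from~(a) to conclude $x\circ_3 y = y\circ_3 x$; commutativity of $\circ_3$ then transfers back to commutativity of $\cdot$ by a short unfolding of the definition of $\circ_3$ (if $z=x\circ_3 y=y\circ_3 x$, then $zx=y$ and $zy=x$, and by letting $x,y$ range one recovers $ab=ba$). The case $Q_2 = Q_4$ is entirely parallel.

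For parts~(d) and~(e), my approach is to show that within each part all three equalities of parastrophes unfold to one and the same identity on $Q$ itself. In~(d) I expect $Q_1 = Q$, $Q_2 = Q_3$, and $Q_4 = Q_5$ all to translate into $x(xy) = y$ (equivalently $L_x^2 = \mathrm{id}$ for every $x$); in~(e) the three equalities should all translate into $(xy)y = x$ (equivalently $R_y^2 = \mathrm{id}$). Each translation is a one-line application of the defining equivalence for the $\circ_i$ involved, together with an axiom like $x\backslash(xy)=y$. The only friction is notational: one must apply the six defining equivalences carefully without swapping the roles of the two arguments; there is no substantial mathematical obstacle, and indeed the whole lemma is really an exercise in converting between the $6$ formal descriptions of a quasigroup.
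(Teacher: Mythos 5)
Your ``pure bookkeeping'' step for part $(a)$ is exactly where the difficulty lies, and you pass over it. With the paper's defining table, $x\circ_1 y$ is the solution $z$ of $x\cdot z=y$, i.e.\ $x\circ_1y=x\backslash y$, whereas $y\circ_3 x$ is the solution $z$ of $z\cdot y=x$, i.e.\ $y\circ_3x=x/y$: the two sides are \emph{not} characterised by the same equation, and the printed identity $x\circ_1 y=y\circ_3 x$ is false in general --- already in $(\mathbb{Z}_3,+)$ one has $x\circ_1y=y-x$ but $y\circ_3x=x-y$. Likewise $x\circ_2y=x/y$ while $y\circ_4x=x\backslash y$. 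What the bookkeeping actually yields is $x\circ_1y=y\circ_4x$ and $x\circ_2y=y\circ_3x$, hence the trivial anti-isotopisms in $(b)$ are $Q_1\sim Q_4$ and $Q_2\sim Q_3$; the pairing printed in $(a)$, $(b)$ must be a misprint, since $Q_1\sim Q_3$ holding for every quasigroup would, by Lemma \ref{L-26}$(a)$, force $Q\approx Q_3$ for every quasigroup, contradicting e.g.\ Example \ref{Ex1}, where the three isotopy classes are distinct. So your verification, if actually carried out, does not close: either you flag and correct the statement, or you are claiming to have checked identities that fail.

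The same slip propagates into your argument for the middle equivalences of $(c)$. The equivalence $Q_1=Q_3\leftrightarrow xy=yx$ is true, but your route uses the false identity from $(a)$, and its final step is wrong on its own terms: commutativity of $\circ_3$ is \emph{not} equivalent to commutativity of $\cdot$. From your own conditions $z\cdot x=y$ and $z\cdot y=x$, putting $y=zx$ gives $z\cdot zx=x$, i.e.\ $Q=Q_1$, not $ab=ba$. The correct one-line argument avoids $(a)$ altogether: $Q_1=Q_3$ says $x\backslash y=y/x$ for all $x,y$; substituting $y=xz$ gives $(xz)/x=z$, i.e.\ $xz=zx$, and the converse is immediate; the case $Q_2=Q_4$ is parallel, starting from $x/y=y\backslash x$. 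Your translations in $(d)$ and $(e)$ (all three equalities equivalent to $x\cdot xy=y$, respectively $xy\cdot y=x$) are correct. The paper states this lemma without proof, so there is nothing to compare against; but as written, your treatment of $(a)$, $(b)$ and the middle equivalences of $(c)$ does not go through.
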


To describe the relationship between the parastrophes, we will need these two simple lemmas.

\begin{lemma}\label{L-22}
Let $A,B,C,D$ be quasigroups. Then

\smallskip
$(a)$ \ $A\sim B, \ B\sim C\longrightarrow A\approx C$,\vspace*{1mm}

$(b)$ \ $A\sim B, \ B\approx C\longrightarrow A\sim C$,\vspace*{1mm}

$(c)$ \ $A\approx B, \ B\sim C\longrightarrow A\sim C$.
\end{lemma}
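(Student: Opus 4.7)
The plan is to prove all three parts by the same mechanism: take the defining equation of each hypothesis, solve one of them for the middle operation $\circ_B$ in terms of $\cdot_A$, substitute into the other, and then reindex the free variables so that the result takes the canonical form of either an isotopism or an anti-isotopism. The whole proof reduces to bookkeeping about compositions of permutations and about how many times the arguments get swapped.

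For part $(a)$, from $A\sim B$ I extract $\alpha_1,\beta_1,\gamma_1\in S_Q$ with $\alpha_1(x)\circ_B\beta_1(y)=\gamma_1(y\cdot_A x)$, and from $B\sim C$ I extract $\alpha_2,\beta_2,\gamma_2$ with $\alpha_2(x)\circ_C\beta_2(y)=\gamma_2(y\circ_B x)$. Setting $u=\alpha_1(x)$, $v=\beta_1(y)$ in the first identity rewrites $u\circ_B v$ as $\gamma_1(\beta_1^{-1}(v)\cdot_A\alpha_1^{-1}(u))$, and substituting this into the second identity gives $\alpha_2(x)\circ_C\beta_2(y)=\gamma_2\gamma_1(\beta_1^{-1}(x)\cdot_A\alpha_1^{-1}(y))$. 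A final change of variables $p=\beta_1^{-1}(x)$, $q=\alpha_1^{-1}(y)$ yields $\alpha_2\beta_1(p)\circ_C\beta_2\alpha_1(q)=\gamma_2\gamma_1(p\cdot_A q)$. The two argument-reversals coming from the two anti-isotopisms have cancelled, so this is an isotopism from $A$ to $C$.

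Parts $(b)$ and $(c)$ follow the same template, but now exactly one of the two composed relations reverses the arguments; hence the final equation retains a single reversal and is an anti-isotopism. In $(b)$, expressing $\circ_B$ via $A\sim B$ and substituting into the isotopism $B\approx C$ leads, after relabelling, to $\alpha_2\alpha_1(p)\circ_C\beta_2\beta_1(q)=\gamma_2\gamma_1(q\cdot_A p)$. In $(c)$, expressing $\circ_B$ via $A\approx B$ and substituting into the anti-isotopism $B\sim C$ leads similarly to $\alpha_2\beta_1(p)\circ_C\beta_2\alpha_1(q)=\gamma_2\gamma_1(q\cdot_A p)$. Both exhibit $A\sim C$.

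The main obstacle is not conceptual but notational: after each substitution one must reindex through the inverses $\alpha_i^{-1},\beta_i^{-1}$ so that the free variables $x,y$ appear in the canonical positions required by the definitions of $\approx$ and $\sim$, and it is easy to swap which permutation pairs with which argument. Once this relabelling is carried out carefully for $(a)$, the calculations for $(b)$ and $(c)$ are immediate variants, so no genuinely new idea is needed beyond tracking the single parity bit ``number of reversals mod $2$''.
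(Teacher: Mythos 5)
Your proof is correct: the paper states this lemma without proof (as a ``simple'' observation), and your argument—solving the first relation for $\circ_B$, substituting into the second, and relabelling through $\alpha_1^{-1},\beta_1^{-1}$ to obtain the triples $(\alpha_2\beta_1,\beta_2\alpha_1,\gamma_2\gamma_1)$, resp.\ $(\alpha_2\alpha_1,\beta_2\beta_1,\gamma_2\gamma_1)$—is exactly the routine verification the author intends, with the parity of argument-reversals deciding between $\approx$ and $\sim$. All displayed identities check out against the paper's definitions of isotopism and anti-isotopism, and the composed maps are permutations, so nothing is missing.
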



\begin{lemma}\label{L-23}
Let $Q_i^{\circ}$ be the $i$-th parastrophe of the quasigroup $Q^{\circ}=(Q,\circ)$.
Then

\smallskip
$(a)$ \ $Q\approx Q^{\circ}$ implies $Q_i\approx Q_i^{\circ}$ for each $i=1,2,3,4,5$,\vspace*{1mm}

\smallskip
$(b)$ \ $Q_i\approx Q_i^{\circ}$ for some $i=1,2,3,4,5$ implies $Q\approx Q^{\circ}.$ \vspace*{1mm}

\smallskip
$(c)$ \ Moreover, if $Q\approx Q^{\circ}$, then for each $i=1,\ldots,5$\vspace{2mm}

 \centerline{$Q\sim Q_i\longleftrightarrow Q^{\circ}\sim Q_i^{\circ},$ and 
 \ $Q\approx Q_i\longleftrightarrow Q^{\circ}\approx Q_i^{\circ}$.}
\end{lemma}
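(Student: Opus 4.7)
The plan is to prove the three parts in sequence, treating (a) as the core calculation and deducing (b) and (c) from it together with Lemma~\ref{L-22}.

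For (a), I start from the isotopism identity $\alpha(x)\circ\beta(y)=\gamma(x\cdot y)$ and rewrite it in each parastrophic language using only the defining equivalence of $\circ_i$. For example, for $i=1$: set $z=x\circ_1 y$, so $x\cdot z=y$; substituting into the isotopism gives $\alpha(x)\circ\beta(z)=\gamma(y)$, which under the defining rule for $\circ_1^{\circ}$ rewrites as $\alpha(x)\circ_1^{\circ}\gamma(y)=\beta(z)=\beta(x\circ_1 y)$. Thus $(\alpha,\gamma,\beta)$ is an isotopism $Q_1\to Q_1^{\circ}$. The same template applied to $\circ_2,\circ_3,\circ_4,\circ_5$ produces in each case an isotopism whose triple is simply a permutation of $(\alpha,\beta,\gamma)$ corresponding to $\sigma_i$; nothing beyond re-labelling is required.

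For (b), the key observation is that parastrophy is an $S_3$-action on quasigroups on the set $Q$: for each $i\in\{1,\ldots,5\}$ there exists a $j$ with $(Q_i)_j=Q$, and the same permutation satisfies $(Q_i^{\circ})_j=Q^{\circ}$ (the formation of $Q_j$ from a quasigroup $(Q,*)$ depends only on $*$ and $j$, not on which quasigroup we started with). Applying (a) to the hypothesis $Q_i\approx Q_i^{\circ}$ with this $j$ immediately yields $Q\approx Q^{\circ}$.

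For (c), both equivalences follow by combining (a) with Lemma~\ref{L-22}. Assuming $Q\approx Q^{\circ}$, part (a) gives $Q_i\approx Q_i^{\circ}$. If $Q\sim Q_i$, then Lemma~\ref{L-22}(c) applied to $Q^{\circ}\approx Q$ and $Q\sim Q_i$ gives $Q^{\circ}\sim Q_i$, and then Lemma~\ref{L-22}(b) applied to $Q^{\circ}\sim Q_i$ and $Q_i\approx Q_i^{\circ}$ upgrades this to $Q^{\circ}\sim Q_i^{\circ}$. The reverse implication is obtained by exchanging the roles of $Q$ and $Q^{\circ}$, which is legitimate because $\approx$ is symmetric. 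The equivalence $Q\approx Q_i\Leftrightarrow Q^{\circ}\approx Q_i^{\circ}$ is even easier: it drops out of the transitivity of $\approx$ together with the two relations $Q\approx Q^{\circ}$ and $Q_i\approx Q_i^{\circ}$. The main obstacle I anticipate is purely bookkeeping in (a)—reading off the correct permutation of $(\alpha,\beta,\gamma)$ for each $\circ_i$ from its defining rule—while (b) and (c) are essentially formal consequences once (a) is in hand.
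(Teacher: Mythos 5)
Your proof is correct: the computation in (a) does give, for each $i$, an isotopism of $Q_i$ onto $Q_i^{\circ}$ whose triple is just the appropriate permutation of $(\alpha,\beta,\gamma)$, part (b) follows since every parastrophe operation is undone by some parastrophe operation (e.g.\ $(Q_3)_4=Q$, $(Q_4)_3=Q$, and $\sigma_1,\sigma_2,\sigma_5$ are involutions), and (c) is a formal consequence of (a) together with Lemma~\ref{L-22} and the symmetry and transitivity of $\approx$. The paper states Lemma~\ref{L-23} without proof, and your argument is exactly the routine verification it implicitly relies on, so nothing further needs comparison.
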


Now we will present a series of lemmas about anti-isotopies of quasigroups and their parastrophes.

\begin{lemma}\label{L-24}
$Q\sim Q\longleftrightarrow Q\approx Q_5\longleftrightarrow Q_1\approx Q_3\longleftrightarrow Q_2\approx Q_4$.
\end{lemma}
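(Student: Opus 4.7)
The plan is to establish the four-term chain of equivalences by using $Q\sim Q_5$ from Lemma~\ref{L-21}(b) as a pivot, then transporting the condition to the remaining parastrophe pairs via Lemma~\ref{L-23}.

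First I would handle $Q\sim Q \Leftrightarrow Q\approx Q_5$. In the forward direction, $Q\sim Q$ together with $Q\sim Q_5$ gives $Q\approx Q_5$ by Lemma~\ref{L-22}(a) (applied with $A=B=Q$, $C=Q_5$). For the converse, I would rewrite $Q\approx Q_5$ as $Q_5\approx Q$ (using that $\approx$ is an equivalence relation) and combine with the standing $Q\sim Q_5$ via Lemma~\ref{L-22}(b) to obtain $Q\sim Q$.

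For the remaining two equivalences I first need to verify that $(Q_5)_1=Q_3$ and $(Q_5)_2=Q_4$. This is a short computation with the associated permutations in $S_3$: using that $(Q_j)_i$ corresponds to $\sigma_i\sigma_j$, one finds $\sigma_1\sigma_5=(23)(12)=(132)=\sigma_3$ and $\sigma_2\sigma_5=(13)(12)=(123)=\sigma_4$. Given these identifications, Lemma~\ref{L-23}(a) applied with $Q^{\circ}=Q_5$ immediately yields $Q_1\approx Q_3$ and $Q_2\approx Q_4$ from $Q\approx Q_5$, while Lemma~\ref{L-23}(b) (with $i=1$ resp.\ $i=2$) supplies the converses, since an isotopism $Q_i\approx(Q_5)_i$ for a single index $i$ already forces $Q\approx Q_5$.

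The only real obstacle is bookkeeping: correctly identifying $(Q_5)_1$ and $(Q_5)_2$ as parastrophes of $Q$, and remembering to use symmetry of $\approx$ at the right point in the backward half of the first equivalence. Once the pivot $Q\sim Q_5$ is in hand together with these parastrophe identifications, every implication reduces to a direct application of Lemmas~\ref{L-22} and~\ref{L-23}.
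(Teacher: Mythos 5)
Your proof is correct, but it follows a genuinely different route from the paper. The paper proves the lemma by direct computation with the isotopism equations and the division operations: it rewrites $Q\sim Q$, i.e.\ $\gamma(xy)=\alpha(y)\beta(x)$, as $\gamma(xy)=\beta(x)\circ_5\alpha(y)$ to get $Q\approx Q_5$, then substitutes $x=z/y$ (resp.\ works with $\backslash$) to obtain the intermediate equivalence $Q\sim Q\longleftrightarrow Q_1\sim Q_2$ and from it $Q_2\approx Q_4$ and $Q_1\approx Q_3$. You instead use the standing fact $Q\sim Q_5$ (Lemma \ref{L-21}(b)) as a pivot, obtain $Q\sim Q\leftrightarrow Q\approx Q_5$ from Lemma \ref{L-22}(a),(b) together with symmetry of $\approx$, and then transport $Q\approx Q_5$ to the parastrophes via Lemma \ref{L-23}(a),(b) after identifying $(Q_5)_1=Q_3$ and $(Q_5)_2=Q_4$. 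Those identifications are the one point needing care: your convention $(Q_j)_i=Q_{\sigma_i\sigma_j}$ is consistent with the paper's relations $Q_3=(Q_1)_2$, $Q_4=(Q_2)_1$, and can also be checked directly from the definitions (e.g.\ in $(Q_5)_1$ one has $x\bullet y=z\leftrightarrow x\circ_5 z=y\leftrightarrow z\cdot x=y$, which is exactly $\circ_3$), so the step is sound. What your route buys is brevity and reuse of the general lemmas, with no identity manipulation; what the paper's computational route buys is the extra intermediate characterization $Q\sim Q\leftrightarrow Q_1\sim Q_2$ and independence from Lemma \ref{L-23}, which the paper states without proof.
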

\begin{proof}
Indeed, 

\medskip
\centerline{$
Q\sim Q\longleftrightarrow \gamma(xy)=\alpha(y)\beta(x)\longleftrightarrow\gamma(xy)=\beta(x)\circ_5\alpha(y)\longleftrightarrow Q\approx Q_5.
$}

\medskip\noindent
Also

\medskip\noindent
$\rule{2mm}{0mm}Q\sim Q\longleftrightarrow \gamma(xy)=\alpha(y)\beta(x)\longleftrightarrow \gamma(z)=\alpha(y)\beta(z/y)\longleftrightarrow \alpha(y)\backslash\gamma(z)=\beta(z/y).
$ 

\medskip
Thus $Q\sim Q\longleftrightarrow Q_1\sim Q_2$. Moreover,

\medskip
$\arraycolsep=.5mm\begin{array}{rl}
\rule{10mm}{0mm}Q_1\sim Q_2&\longleftrightarrow \alpha(y)\backslash\gamma(z)=\beta(z/y)\longleftrightarrow
\gamma(z)=\alpha(y)\beta(z/y)\\[3pt]
 &\longleftrightarrow \beta(z/y)=\gamma(z)\circ_4\alpha(y)\longleftrightarrow Q_2\approx Q_4.
\end{array}
$ 

\medskip
Similarly, for some $\alpha',\beta',\gamma'\in S_Q$ we have

\medskip
$\arraycolsep=.5mm\begin{array}{rl}
\rule{10mm}{0mm}Q_1\sim Q_2&\longleftrightarrow \gamma'(x/y)=\alpha'(y)/\beta'(x)\longleftrightarrow\gamma'(x\backslash y)\beta'(x)=\alpha'(y)\\[3pt]
&\longleftrightarrow \gamma'(x\backslash y)=\beta'(x)\circ_3\alpha'(y)\longleftrightarrow Q_1\approx Q_3.
\end{array}$ 

\medskip
This completes the proof.
\end{proof}

\begin{lemma}\label{L-25}
$Q\sim Q_1\longleftrightarrow Q\sim Q_2\longleftrightarrow Q_1\approx Q_2$.
\end{lemma}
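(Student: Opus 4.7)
The plan is to prove both equivalences $Q\sim Q_1\Leftrightarrow Q_1\approx Q_2$ and $Q\sim Q_2\Leftrightarrow Q_1\approx Q_2$ directly, by manipulating the defining identities using the quasigroup axioms in the style of the proof of Lemma~\ref{L-24}. The central observation is that, after clearing divisions via $a(a\backslash b)=b$ and $(c/b)b=c$, swapping the two universally quantified variables appropriately, and replacing a parastrophe expression by a fresh variable, both sides of each claimed equivalence collapse to the same functional equation in three unknown permutations, differing only by a cyclic relabelling of the witness triple $(\alpha,\beta,\gamma)$.

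For $Q\sim Q_1\Leftrightarrow Q_1\approx Q_2$: the relation $Q\sim Q_1$ reads $\alpha(x)\backslash\beta(y)=\gamma(yx)$, which I rewrite as $\beta(y)=\alpha(x)\gamma(yx)$ and, after swapping $x\leftrightarrow y$, as $\beta(x)=\alpha(y)\gamma(xy)$. On the other side, $Q_1\approx Q_2$ reads $\alpha'(x)/\beta'(y)=\gamma'(x\backslash y)$, which becomes $\alpha'(x)=\gamma'(x\backslash y)\beta'(y)$ and, after the substitution $z=x\backslash y$ (so that $y=xz$), $\alpha'(x)=\gamma'(z)\beta'(xz)$. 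These two identities are the same equation in free permutations, so the correspondence $(\alpha',\beta',\gamma')=(\beta,\gamma,\alpha)$ converts witnesses of one into witnesses of the other. For $Q\sim Q_2\Leftrightarrow Q_1\approx Q_2$ I apply the same method to the dual pair: $Q\sim Q_2$ becomes $\alpha(y)=\gamma(xy)\beta(x)$ after clearing the division and swapping $x\leftrightarrow y$, while $Q_2\approx Q_1$ (which equals $Q_1\approx Q_2$ by symmetry of $\approx$) reads $\alpha'(x)\backslash\beta'(y)=\gamma'(x/y)$ and, after the substitution $z=x/y$ (so that $x=zy$), becomes $\beta'(y)=\alpha'(zy)\gamma'(z)$. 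These again coincide, this time via the relabelling $(\alpha',\beta',\gamma')=(\gamma,\alpha,\beta)$.

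The only delicate point is coordinating each substitution ($z=x\backslash y$ versus $z=x/y$) with the correct variable swap, so that the two identities being compared have matching free variables and identical placement of $\alpha,\beta,\gamma$; if these are chosen inconsistently the two forms look only superficially similar and the simple relabelling argument fails. Apart from this bookkeeping, no axiomatic manipulation beyond what already appears in the proof of Lemma~\ref{L-24} is required.
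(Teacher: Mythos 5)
Your proof is correct. It runs on the same equational machinery as the paper's proof (clear $\backslash$ and $/$ via the quasigroup axioms, reparametrize by a bijective substitution such as $y=xz$ or $x=zy$, then compare the resulting identities), but the logical decomposition is genuinely different: you establish each of $Q\sim Q_1\leftrightarrow Q_1\approx Q_2$ and $Q\sim Q_2\leftrightarrow Q_1\approx Q_2$ by a direct witness relabelling, so that $Q\sim Q_1\leftrightarrow Q\sim Q_2$ falls out as a byproduct and Lemma \ref{L-22} is never invoked. The paper instead first proves $Q_1\sim Q\leftrightarrow Q\sim Q_2$ by the chain $\gamma(x\backslash z)=\alpha(z)\beta(x)\leftrightarrow\gamma(y)=\alpha(xy)\beta(x)\leftrightarrow\alpha(xy)=\gamma(y)/\beta(x)$, then gets $Q_1\approx Q_2$ from these two anti-isotopies via Lemma \ref{L-22}$(a)$, and proves only the converse $Q_1\approx Q_2\Rightarrow Q\sim Q_1$ directly; that converse computation ($\gamma(x\backslash y)\beta(y)=\alpha(x)$ with $y=xz$) is essentially your first equivalence read in one direction. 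What your route buys is self-containedness and the explicit observation that the three conditions are, witness for witness, the same functional equation up to a cyclic permutation of $(\alpha,\beta,\gamma)$; what the paper's route buys is brevity, by delegating one implication to the already-proved composition lemma. Your bookkeeping concern is the right one to flag, and it is handled consistently: every step is an equivalence because the substitutions $z\mapsto xz$ (for fixed $x$) and $z\mapsto zy$ (for fixed $y$) are bijections of $Q$, and variable swaps preserve universally quantified identities.
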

\begin{proof} 
Indeed, according to the definition of the operations $\backslash$ and $/$, we have
$$
\gamma(x\backslash z)=\alpha(z)\beta(x)\longleftrightarrow \gamma(y)=\alpha(xy)\beta(x)\longleftrightarrow \alpha(xy)=\gamma(y)/\beta(x).
$$
So, $Q_1\sim Q\longleftrightarrow Q\sim Q_2$, which by Lemma \ref{L-22} implies $Q_1\approx Q_2$.

\smallskip
Conversely, if $Q_1\approx Q_2$, then 
$
\gamma(x\backslash y)=\alpha(x)/\beta(y),
$
i.e., $\gamma(x\backslash y)\beta(y)=\alpha(x)$ for some $\alpha,\beta,\gamma\in S_Q$. 
From this, for $y=xz$, we obtain $\gamma(z)\beta(xz)=\alpha(x)$, i.e., $\beta(xz)=\gamma(z)\backslash\alpha(x)$. Thus, $Q\sim Q_1$, and consequently, also $Q\sim Q_2$.
\end{proof}

\begin{lemma}\label{L-26} For any quasigroup $Q$ 

\smallskip
$(a)$ \ $Q_1\sim Q\longleftrightarrow Q_1\sim Q_3\longleftrightarrow Q\approx Q_3\longleftrightarrow Q_1\approx Q_5$,

\smallskip
$(b)$ \ $Q_2\sim Q\longleftrightarrow Q_2\sim Q_4\longleftrightarrow Q\approx Q_4\longleftrightarrow Q_2\approx Q_5$.
\end{lemma}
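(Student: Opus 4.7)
The plan is to mimic the proof strategy used for Lemmas~\ref{L-24} and~\ref{L-25}: take the defining identity of each of the four relations in~(a) and rewrite it into the identity defining the next, using only the quasigroup axioms $x(x\backslash z)=z$, $(z/y)y=z$, together with the dictionary $xy=y\circ_5 x$ and the operations $\circ_i$. I will prove~(a) in detail; part~(b) then follows by the symmetric argument with the roles of the two division operations $\backslash$ and $/$ (respectively, of $\circ_3$ and $\circ_4$) exchanged throughout.

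The first two equivalences in~(a) I handle by direct manipulation of the identity $\gamma(y\backslash x)=\alpha(x)\beta(y)$ that defines $Q_1\sim Q$. Swapping the dummy variables $x$ and $y$ and using $\alpha(y)\beta(x)=\beta(x)\circ_5\alpha(y)$ yields $\gamma(x\circ_1 y)=\beta(x)\circ_5\alpha(y)$, which is exactly the defining identity of $Q_1\approx Q_5$; since every step is reversible, this gives $Q_1\sim Q\longleftrightarrow Q_1\approx Q_5$. Next, substituting $x=yz$ and using $y\backslash(yz)=z$ gives $\gamma(z)=\alpha(yz)\beta(y)$, which I rearrange as $\alpha(yz)=\gamma(z)/\beta(y)=\beta(y)\circ_3\gamma(z)$; under the relabeling $(x,y)\mapsto(y,z)$ this is the isotopism $Q\approx Q_3$, and replacing $z$ by $y\backslash x$ reverses the step, giving $Q_1\sim Q\longleftrightarrow Q\approx Q_3$.

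For the remaining equivalence $Q_1\sim Q\longleftrightarrow Q_1\sim Q_3$ I proceed in two halves. The forward direction uses what we already have: $Q_1\sim Q$ together with $Q\approx Q_3$ yields $Q_1\sim Q_3$ by Lemma~\ref{L-22}(b). For the converse I start from $\gamma(y\backslash x)=\beta(y)/\alpha(x)$, which defines $Q_1\sim Q_3$, rewrite it as $\beta(y)=\gamma(y\backslash x)\alpha(x)$ using $(u/v)v=u$, substitute $x=yu$ so that $y\backslash(yu)=u$ to reach $\beta(y)=\gamma(u)\alpha(yu)$, and read off $\alpha(yu)=\gamma(u)\backslash\beta(y)$, which is the defining identity of $Q\sim Q_1$, equivalent to $Q_1\sim Q$ by the symmetry of the relation $\sim$. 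The main obstacle throughout is purely bookkeeping: at each step one has to match the triple $(\alpha,\beta,\gamma)$ produced by the substitution with the (differently permuted) triple that appears in the defining identity of the target relation, and to keep track of which of $\backslash$ or $/$ corresponds to $\circ_3$ in each arrangement of variables.
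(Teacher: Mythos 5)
Your proof is correct: the defining identity of $Q_1\sim Q$ is indeed $\gamma(y\backslash x)=\alpha(x)\beta(y)$, the substitutions $x=yz$ and $x=yu$ are bijective reparametrizations, the translations into $\circ_3$ and $\circ_5$ match the definitions, so all four statements in (a) become equivalent to $Q_1\sim Q$, and the mirror argument with $/$ and $\circ_4$ does give (b). It is, however, a genuinely different route from the paper's, which is a two-line reduction: since $(Q_1)_1=Q$ and $(Q_1)_2=Q_3$, replacing $Q$ by $Q_1$ in Lemma~\ref{L-25} yields $Q_1\sim Q\longleftrightarrow Q_1\sim Q_3\longleftrightarrow Q\approx Q_3$ at once, and the remaining equivalence $Q\approx Q_3\longleftrightarrow Q_1\approx Q_5$ follows from Lemma~\ref{L-23} with $i=1$, because $(Q_3)_1=Q_5$; part (b) is obtained the same way with $Q_2$ in place of $Q_1$. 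In effect you re-prove the $Q_1$-instance of Lemma~\ref{L-25} from scratch and verify directly the instance $Q_1\sim Q\longleftrightarrow Q_1\approx Q_5$ of the general observation ($X\sim Y$ iff $X$ is isotopic to the fifth parastrophe of $Y$) that the paper treats as obvious in Lemmas~\ref{L-27} and~\ref{L-28}. Your version buys a self-contained verification at the level of identities, with the permutation triples explicit; the paper's buys brevity and reuses structural facts (compatibility of parastrophy with isotopy, composition of parastrophes) that it exploits again in later proofs. One detail worth recording explicitly in your write-up: the symmetry of $\sim$ invoked at the end is true (replace the permutations by their inverses) and is also used tacitly in the paper, but it is stated nowhere as a lemma.
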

\begin{proof}
Replacing in Lemma \ref{L-25} a quasigroup $Q$ by $Q_1$ we get the first two equi\-valences. The third equivalence is a consequence of Lemma \ref{L-23}.

Similarly, replacing $Q$ by $Q_2$ we obtain $(b)$. 
\end{proof}

\begin{lemma}\label{L-27}
$Q_3\sim Q\longleftrightarrow Q\approx Q_2\longleftrightarrow Q_1\approx Q_4 \longleftrightarrow Q_3\approx Q_5$.
\end{lemma}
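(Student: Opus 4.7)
The plan is to prove the three isotopy equivalences $Q\approx Q_2\Leftrightarrow Q_1\approx Q_4\Leftrightarrow Q_3\approx Q_5$ via Lemma \ref{L-23}, and then splice in the anti-isotopy condition $Q_3\sim Q$ using Lemmas \ref{L-21} and \ref{L-22}. The strategy avoids any new direct algebraic manipulation of the sort used in the proofs of Lemmas \ref{L-24} and \ref{L-25}.

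The key preliminary is to identify $(Q_2)_1=Q_4$ (already recorded after the definition of the parastrophes) and $(Q_2)_3=Q_5$; the latter is a one-line computation from the defining equivalences, since in $Q_2$ the operation is $x\circ_2 y=x/y$ and its third parastrophe satisfies $x\circ y=z\Leftrightarrow z\circ_2 x=y\Leftrightarrow z=yx$, which is exactly $\circ_5$. With these identifications in hand, Lemma \ref{L-23}(a) applied to the pair $(Q,Q_2)$ with $i=1$ and $i=3$ gives the forward implications $Q\approx Q_2\Rightarrow Q_1\approx Q_4$ and $Q\approx Q_2\Rightarrow Q_3\approx Q_5$, while Lemma \ref{L-23}(b) gives the converses.

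To handle $Q_3\sim Q$, I will use the universal relation $Q\sim Q_5$ from Lemma \ref{L-21}(b), together with $Q_5\sim Q$, the latter obtained by applying Lemma \ref{L-21}(b) to the quasigroup $Q_5$ (using $(Q_5)_5=Q$, which is immediate from the definitions since $\sigma_5$ has order two). Then $Q_3\sim Q$ combined with $Q\sim Q_5$ yields $Q_3\approx Q_5$ by Lemma \ref{L-22}(a), and conversely $Q_3\approx Q_5$ combined with $Q_5\sim Q$ yields $Q_3\sim Q$ by Lemma \ref{L-22}(c). The only genuine obstacle is the small bookkeeping with the parastrophe identifications $(Q_2)_3=Q_5$ and $(Q_5)_5=Q$; after that the proof is a short assembly of earlier lemmas.
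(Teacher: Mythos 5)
Your proposal is correct, but it takes a genuinely different route from the paper's proof. The paper establishes $Q_3\sim Q\longleftrightarrow Q\approx Q_2$ and $Q_3\sim Q\longleftrightarrow Q_1\approx Q_4$ by direct manipulation with the division operations (rewriting $Q_3\sim Q$ as $\gamma(xy)\alpha(y)=\beta(x)$, hence $\gamma(xy)=\beta(x)/\alpha(y)$, and substituting $xy=z$ to get $\gamma(z)\alpha(x\backslash z)=\beta(x)$), and treats $Q_3\sim Q\longleftrightarrow Q_3\approx Q_5$ as obvious. You instead make the three isotopy statements equivalent by applying Lemma~\ref{L-23}(a),(b) to the pair $(Q,Q_2)$, using the identifications $(Q_2)_1=Q_4$ (already recorded in the paper) and $(Q_2)_3=Q_5$ (your one-line computation is correct: $x\star y=z\leftrightarrow z\circ_2 x=y\leftrightarrow y\cdot x=z$), and you then splice in the anti-isotopy via $Q\sim Q_5$, $Q_5\sim Q$ and Lemma~\ref{L-22}(a),(c) --- which is precisely what the paper's ``obviously'' conceals, so that part coincides in substance. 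Your argument is more structural and uniform, in the spirit of the paper's own proof of Lemma~\ref{L-26} (the same scheme would also dispatch Lemma~\ref{L-28} using $(Q_1)_2=Q_3$), and it avoids writing down any permutation identities; the paper's computation, by contrast, is self-contained (it does not lean on the unproved Lemma~\ref{L-23}) and produces the relevant isotopisms explicitly. I see no gap in your argument.
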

\begin{proof}
Obviously $Q_3\sim Q\longleftrightarrow Q_3\approx Q_5$. Moreover,
$$
Q_3\sim Q\longleftrightarrow \gamma(xy)\alpha(y)=\beta(x)\longleftrightarrow \gamma(xy)=\beta(x)/\alpha(y)\longleftrightarrow Q\approx Q_2.
$$ 
Analogously, $xy=z$ we obtain 
$$Q_3\sim Q\longleftrightarrow
\gamma(z)\alpha(x\backslash z)=\beta(x)\longleftrightarrow Q_1\approx Q_4.
$$
This completes the proof.
\end{proof}

\begin{lemma}\label{L-28}
$Q_4\sim Q\longleftrightarrow Q\approx Q_1\longleftrightarrow Q_2\approx Q_3\longleftrightarrow Q_4\approx Q_5$.
\end{lemma}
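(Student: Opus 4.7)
The plan is to mirror the strategy used in Lemmas \ref{L-25}--\ref{L-27}: translate each of the four statements into a single functional equation in $(Q,\cdot,\backslash,/)$, and then show that three of those equations are obtained from the first by purely formal substitutions. The hinge will be the condition $Q_4\sim Q$, which I will unfold first and then rewrite in three different ways to match $Q\approx Q_1$, $Q_2\approx Q_3$, and $Q_4\approx Q_5$ respectively.

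The key opening step is to note that $u\circ_4 v=v\backslash u$, so the defining equation of $Q_4\sim Q$, after renaming variables so that the product $xy$ appears on the right, becomes
\[
Q_4\sim Q \longleftrightarrow \beta(x)\cdot\gamma(xy)=\alpha(y)
\]
for some $\alpha,\beta,\gamma\in S_Q$. From this I will read off $\gamma(xy)=\beta(x)\backslash\alpha(y)=\beta(x)\circ_1\alpha(y)$, which is exactly $Q\approx Q_1$.

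For the equivalence with $Q_2\approx Q_3$, I will substitute $z=xy$, i.e.\ $x=z/y$, into the displayed equation to obtain $\beta(z/y)\gamma(z)=\alpha(y)$, equivalently $\beta(z/y)=\alpha(y)/\gamma(z)$. Rewriting $z/y=z\circ_2 y$ and $\alpha(y)/\gamma(z)=\gamma(z)\circ_3\alpha(y)$ turns this into $\beta(z\circ_2 y)=\gamma(z)\circ_3\alpha(y)$, which is precisely $Q_2\approx Q_3$. Thus the chain $Q_4\sim Q\longleftrightarrow Q\approx Q_1\longleftrightarrow Q_2\approx Q_3$ is established by a single calculation followed by a variable substitution, exactly parallel to the two cases in the proof of Lemma \ref{L-27}.

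The last equivalence $Q_4\sim Q\longleftrightarrow Q_4\approx Q_5$ needs no new computation: since $Q\sim Q_5$ holds unconditionally by Lemma \ref{L-21}(b), Lemma \ref{L-22}(a) applied to $Q_4\sim Q$ and $Q\sim Q_5$ gives $Q_4\approx Q_5$, while Lemma \ref{L-22}(b) applied to $Q_4\approx Q_5$ and $Q_5\sim Q$ gives back $Q_4\sim Q$. I do not anticipate a conceptual obstacle here; the only real risk is clerical — getting the roles of the two arguments in $\circ_3$ and $\circ_4$ reversed when translating between $\circ_i$ and $\cdot$. To guard against this I will verify once, at the outset, that $u\circ_4 v=v\backslash u$ and $u\circ_3 v=v/u$, since these are the two most easily confused identities used in the argument.
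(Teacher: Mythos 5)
Your proposal is correct and matches the paper's own argument: the paper likewise unfolds $Q_4\sim Q$ as $\beta(x)\gamma(xy)=\alpha(y)$, reads off $Q\approx Q_1$, performs the same substitution $x=z/y$ to get $Q_2\approx Q_3$, and treats $Q_4\approx Q_5$ as immediate from $Q\sim Q_5$ with Lemma~\ref{L-22}. The only slip is clerical: for the direction $Q_4\approx Q_5$, $Q_5\sim Q\Rightarrow Q_4\sim Q$ you should cite Lemma~\ref{L-22}$(c)$ rather than $(b)$ (or first invoke the symmetry of $\sim$), which does not affect the validity of the proof.
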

\begin{proof}
Of course $Q_4\sim Q\longleftrightarrow Q_4\approx Q_5$. Since 
$Q_4\sim Q\longleftrightarrow \beta(x)\gamma(xy)=\alpha(y)$, we obtain $Q_4\sim Q\longleftrightarrow Q\approx Q_1$ and $Q_4\sim Q\longleftrightarrow Q_2\approx Q_3$ for $x=z/y$. 
\end{proof}

\begin{theorem}\label{TQ}
All parastrophes of a quasigroup $Q$ are isotopic to $Q$ if and only if $Q\sim Q$ and $Q\sim Q_i$ for some $i=1,2,3,4$.
\end{theorem}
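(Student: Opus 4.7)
The plan is to leverage Lemmas~\ref{L-24}--\ref{L-28}, each of which bundles a single $\sim$-statement together with several $\approx$-statements, so that a single anti-isotopism hypothesis produces many isotopisms at once.

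For the forward direction, assume $Q\approx Q_j$ for every $j\in\{1,\ldots,5\}$. From $Q\approx Q_5$ and Lemma~\ref{L-24} we deduce $Q\sim Q$, while from $Q\approx Q_3$ and Lemma~\ref{L-26}(a) we deduce $Q\sim Q_1$. Thus the right-hand side of the theorem is satisfied with $i=1$.

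For the backward direction, the first step is to extract the maximum amount of information from $Q\sim Q$ alone: Lemma~\ref{L-24} converts it into three simultaneous isotopisms $Q\approx Q_5$, $Q_1\approx Q_3$, $Q_2\approx Q_4$. Consequently, before the second hypothesis is used, the six parastrophes are already split into at most three $\approx$-classes: $\{Q,Q_5\}$, $\{Q_1,Q_3\}$ and $\{Q_2,Q_4\}$. It then suffices to produce two further $\approx$-links that fuse these three pairs into one class. I would split on $i$: if $i=1$, Lemma~\ref{L-26}(a) gives $Q\approx Q_3$ and Lemma~\ref{L-25} gives $Q_1\approx Q_2$; if $i=2$, Lemma~\ref{L-26}(b) gives $Q\approx Q_4$ and Lemma~\ref{L-25} again gives $Q_1\approx Q_2$; if $i=3$, Lemma~\ref{L-27} yields both $Q\approx Q_2$ and $Q_1\approx Q_4$; if $i=4$, Lemma~\ref{L-28} yields $Q\approx Q_1$ and $Q_2\approx Q_3$. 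In every case the two extra links straddle all three baseline pairs, collapsing the partition to a single class.

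The main obstacle is combinatorial rather than computational: one has to verify, for each of the four permitted values of $i$, that the packaged lemma indeed produces two $\approx$-links which together connect the three baseline pairs, never leaving one pair isolated. This also clarifies why $i=5$ is omitted from the hypothesis---the statement $Q\sim Q_5$ carries no extra information, being automatic by Lemma~\ref{L-21}(b).
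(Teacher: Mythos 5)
Your proposal is correct and follows essentially the same route as the paper: Lemma~\ref{L-24} turns $Q\sim Q$ into the three baseline isotopisms, and Lemmas~\ref{L-25}--\ref{L-28} supply the extra links for each value of $i$, while the forward direction reads the needed anti-isotopisms off Lemmas~\ref{L-24} and \ref{L-26}. You are merely more explicit than the paper's terse proof, in particular in invoking Lemma~\ref{L-25} to bring the pair $\{Q_2,Q_4\}$ into the single class when $i=1,2$, a step the paper leaves implicit.
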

\begin{proof}
If $Q\sim Q$, then, by Lemma \ref{L-24}, we have $Q\approx Q_5$, $Q_1\approx Q_3$ and $Q_2\approx Q_4$.
This for $Q\sim Q_i$, $i=1,2,3,4$, by Lemmas \ref{L-26}, \ref{L-27} and \ref{L-28}, gives $Q\approx Q_1\approx Q_2\approx Q_3\approx Q_4\approx Q_5$. So, in this case all parastrophes are isotopic to $Q$.

The converse statement is obvious.
\end{proof}

\begin{corollary}\label{C210}
If $Q\sim Q$ and $Q\sim Q_i$ for some $i=1,2,3,4$, then also $Q\sim Q_i$ for other $i=1,2,3,4,5$.
\end{corollary}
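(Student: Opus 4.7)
The plan is to piece together Theorem \ref{TQ} with part (b) of Lemma \ref{L-22}, since everything needed has essentially already been proved by the time we reach the corollary.

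First, I would observe that the hypothesis ``$Q\sim Q$ and $Q\sim Q_i$ for some $i\in\{1,2,3,4\}$'' is exactly the hypothesis of Theorem \ref{TQ}. Applying that theorem immediately gives $Q\approx Q_j$ for every $j=1,2,3,4,5$. So the anti-isotopy content of the hypothesis has been converted into isotopy information about every parastrophe simultaneously.

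Next, I would upgrade each of these isotopies to an anti-isotopy by combining it with the assumption $Q\sim Q$. For any fixed $j\in\{1,2,3,4,5\}$ we have $Q\sim Q$ together with $Q\approx Q_j$, and Lemma \ref{L-22}(b) (with $A=B=Q$ and $C=Q_j$) produces $Q\sim Q_j$. Running $j$ over $\{1,2,3,4,5\}$ finishes the proof.

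There is really no obstacle here: the substantive work was done in proving Theorem \ref{TQ}, and the corollary only has to splice that conclusion with the transitivity-type rule of Lemma \ref{L-22}(b). The one thing worth writing carefully is simply to point out that the single given $Q\sim Q_i$ is not used again directly after invoking Theorem \ref{TQ}; only the conclusion $Q\approx Q_j$ (for all $j$) plus the auxiliary hypothesis $Q\sim Q$ are needed to deliver $Q\sim Q_j$ for the remaining indices.
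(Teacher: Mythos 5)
Your proof is correct and follows the route the paper intends: Theorem \ref{TQ} yields $Q\approx Q_j$ for all $j$, and Lemma \ref{L-22}(b) with $A=B=Q$, $C=Q_j$ then converts $Q\sim Q$ plus $Q\approx Q_j$ into $Q\sim Q_j$. The paper states the corollary without proof precisely because this splice of Theorem \ref{TQ} with Lemma \ref{L-22} is the evident argument, so nothing further is needed.
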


\begin{theorem}\label{TQ1}
A quasigroup $Q$ has exactly two classes of isotopic parastrophes if and only if 

\smallskip
$(1)$ \ $Q\not\sim Q$, $Q\sim Q_1$ and $Q\not\sim Q_i$ for $i=2,3,4$, or equivalently, \vspace*{1mm}

$(2)$ \ $Q\not\sim Q$, $Q\sim Q_2$ and $Q\not\sim Q_i$ for $i=1,3,4$. \vspace*{1mm}

\noindent
In this case $Q\approx Q_3\approx Q_4$ and $Q_1\approx Q_2\approx Q_5$.
\end{theorem}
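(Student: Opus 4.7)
The plan is to observe the equivalence of conditions (1) and (2) from Lemma~\ref{L-25}, to handle "$\Leftarrow$" by chaining Lemma~\ref{L-22}(a) through the permanent anti-isotopies of Lemma~\ref{L-21}(b), and to tackle "$\Rightarrow$" by restricting the shape of the partition and then running a short case analysis.

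Equivalence of (1) and (2) is immediate from Lemma~\ref{L-25}, which makes $Q\sim Q_1$ and $Q\sim Q_2$ interchangeable. For "$\Leftarrow$", I would start from $Q\sim Q_1$ and combine it with the permanent $Q\sim Q_5$, $Q_1\sim Q_3$, $Q_2\sim Q_4$ of Lemma~\ref{L-21}(b); three applications of Lemma~\ref{L-22}(a) deliver $Q\approx Q_3$, $Q\approx Q_4$ and $Q_1\approx Q_5$, while Lemma~\ref{L-25} supplies $Q_1\approx Q_2$. This produces the two candidate classes $\{Q,Q_3,Q_4\}$ and $\{Q_1,Q_2,Q_5\}$, proving the "moreover" statement. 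That they are distinct follows because $Q\approx Q_1$ would, by Lemma~\ref{L-28}, force $Q\sim Q_4$, contradicting (1).

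For "$\Rightarrow$", assume exactly two classes. I would first rule out $Q\sim Q$. If it held, Lemma~\ref{L-24} would produce the isotopies $Q\approx Q_5$, $Q_1\approx Q_3$, $Q_2\approx Q_4$; any merging of these three base pairs into two classes would, via Lemmas~\ref{L-25}--\ref{L-28}, force some $Q\sim Q_i$ with $i\in\{1,2,3,4\}$, and Theorem~\ref{TQ} would then collapse everything into a single class. So $Q\not\sim Q$, and Lemma~\ref{L-24} now forces each of $(Q,Q_5)$, $(Q_1,Q_3)$, $(Q_2,Q_4)$ to straddle the two classes. Hence each class has exactly three elements, one from each pair, and the class of $Q$ must be one of $\{Q,Q_1,Q_2\}$, $\{Q,Q_1,Q_4\}$, $\{Q,Q_3,Q_2\}$, or $\{Q,Q_3,Q_4\}$.

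The main obstacle is eliminating the first three candidates. In each, a postulated isotopy $Q\approx Q_i$ fed into Lemma~\ref{L-27} or~\ref{L-28} produces an auxiliary isotopy that crosses the proposed partition: in $\{Q,Q_1,Q_2\}$ one obtains $Q_2\approx Q_3$, in $\{Q,Q_1,Q_4\}$ one obtains $Q_4\approx Q_5$, and in $\{Q,Q_3,Q_2\}$ one obtains $Q_3\approx Q_5$; each contradicts the presumed split. Only $\{Q,Q_3,Q_4\}$ survives, and then Lemma~\ref{L-26}(a) converts $Q\approx Q_3$ into $Q\sim Q_1$ while Lemma~\ref{L-26}(b) converts $Q\approx Q_4$ into $Q\sim Q_2$. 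Finally, $Q\sim Q_3$ or $Q\sim Q_4$ would, by Lemmas~\ref{L-27} and~\ref{L-28} respectively, place $Q_2$ or $Q_1$ into the class of $Q$, contradicting the partition. This establishes condition~(1).
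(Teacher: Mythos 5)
Your argument is correct, and each step can be checked against Lemmas \ref{L-24}--\ref{L-28}, but you organize the forward direction quite differently from the paper. The paper's proof of Theorem \ref{TQ1} is a five-fold case distinction on which isotopy $Q\approx Q_i$ ($i=1,\dots,5$) holds: in each case it lists the conceivable class patterns and eliminates the bad ones with Lemma \ref{L-24}, the surviving cases $Q\approx Q_3$, $Q\approx Q_4$ giving $Q\not\sim Q$ and $Q\sim Q_1$, $Q\sim Q_2$. You instead pivot on the relation $Q\sim Q$: you first exclude it by pigeonholing the three pairs $\{Q,Q_5\}$, $\{Q_1,Q_3\}$, $\{Q_2,Q_4\}$ into two classes and then invoking Theorem \ref{TQ} (proved before Theorem \ref{TQ1}, so there is no circularity) to collapse all parastrophes into one class; with $Q\not\sim Q$ established, Lemma \ref{L-24} forces each pair to straddle the two classes, which pins the class of $Q$ to four explicit three-element candidates, three of which are removed by a crossing isotopy from Lemma \ref{L-27} or \ref{L-28}. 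This decomposition makes the ``exactly two'' bookkeeping more transparent and, unlike the paper's proof, it also delivers the negative clauses $Q\not\sim Q_3$, $Q\not\sim Q_4$ explicitly; the paper's route avoids using Theorem \ref{TQ} but dismisses several subcases only with ``analogously''. In the converse direction you prove disjointness of $\{Q,Q_3,Q_4\}$ and $\{Q_1,Q_2,Q_5\}$ from $Q\not\sim Q_4$ via Lemma \ref{L-28}, where the paper uses $Q\not\sim Q$ and Lemma \ref{L-24}; both are valid. One caveat you share with the author: since Lemma \ref{L-25} makes $Q\sim Q_1$ and $Q\sim Q_2$ equivalent, the clause ``$Q\not\sim Q_2$'' in the literal wording of condition $(1)$ cannot be intended; like the paper's own proof, you read $(1)$ and $(2)$ as the single condition ``$Q\not\sim Q$, $Q\sim Q_1$ (equivalently $Q\sim Q_2$), $Q\not\sim Q_3$, $Q\not\sim Q_4$'', and that is exactly what your argument establishes.
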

\begin{proof}
Let $Q$ have exactly two classes of isotopic parastrophes. Then it must be true that $Q\approx Q_i$ for some $i=1,2,3,4,5$ because $Q\not\approx Q_i$ for all $i=1,2,3,4,5$ gives $Q_1\approx Q_j$ for some $j$ which by previous lemmas implies $Q\approx Q_k$ for some $k$.

\smallskip
{\sc Case} $Q\approx Q_1$. In this case $Q_2\approx Q_3$ and $Q_4\approx Q_5$ (Lemma \ref{L-28}). So, the following classes of isotopic parastrophes are possible: 

\smallskip
$1)$ \ $\{Q,Q_1,Q_2,Q_3\}$, \ $\{Q_4,Q_5\}$, 

\smallskip
$2)$ \ $\{Q,Q_1,Q_4,Q_5\}$, \ $\{Q_2,Q_3\}$, 

\smallskip
$3)$ \ $\{Q,Q_1\}$, \ $\{Q_2,Q_3,Q_4,Q_5\}$.

\smallskip
In the first case from $Q_1\approx Q_3$, by Lemma \ref{L-24}, we conclude $Q\approx Q_5$ which shows that in this case we have only one class. This contradics our assumption on the number of classes. So, this case is impossible.

In the second case, $Q\approx Q_5$, by the same lemma, implies $Q_2\approx Q_4$ which (similarly as in previous case) is impossible. Also the third case is impossible because $Q_2\approx Q_4$ leads to $Q_1\approx Q_3$.  Hence must be $Q\not\approx Q_1$.

\smallskip
{\sc Case} $Q\approx Q_2$. Then, according to Lemma \ref{L-27}, $Q_1\approx Q_4$ and $Q_3\approx Q_5$. Thus

\smallskip
$1)$ \ $\{Q,Q_1,Q_2,Q_4\}$, \ $\{Q_3,Q_5\}$, or

\smallskip
$2)$ \ $\{Q,Q_2,Q_3,Q_5\}$, \ $\{Q_1,Q_4\}$, or

\smallskip
$3)$ \ $\{Q,Q_2\}$, \ $\{Q_1,Q_3,Q_4,Q_5\}$.

\smallskip
Using the same argumentation as in the case $Q\approx Q_1$ we can see that the case $Q\approx Q_2$ is impossible.

\smallskip
{\sc Case} $Q\approx Q_3$. By Lemmas \ref{L-21}, \ref{L-22} and \ref{L-25} only the following classes are possible: $\{Q,Q_3,Q_4\}$ and $\{Q_1,Q_2,Q_5\}$. In this case $Q\not\sim Q$ (Lemma \ref{L-24}) and $Q\sim Q_1$ (Lemma \ref{L-26}). Then also $Q\sim Q_2$ (Lemma \ref{L-25}).

\smallskip
{\sc Case} $Q\approx Q_4$. Analogously as $Q\approx Q_3$.

\smallskip
{\sc Case} $Q\approx Q_5$. Then $Q_1\approx Q_3$ and $Q_2\approx Q_4$. Is a similar way as for $Q\approx Q_1$ we can verify that this case is not possible.

\smallskip
So, if $Q$ has exactly two classes of isotopic parastrophes, then $Q\not\sim Q$ and $Q\sim Q_1$, or $Q\not\sim Q$ and $Q\sim Q_2$.

\smallskip
Conversely, if $Q\not\sim Q$ and $Q\sim Q_1$, or equivalently, $Q\not\sim Q$ and $Q\sim Q_2$, then by Lemmas \ref{L-25} and \ref{L-26} we have two classes: $\{Q,Q_3,Q_4\}$ and $\{Q_1,Q_2,Q_5\}$. Since
$Q_1\not\approx Q_3$ (Lemma \ref{L-24}), these classes are disjoint.
\end{proof}

\begin{theorem}\label{TQ3}
A quasigroup $Q$ has exactly three classes of isotopic parastrophes if and only if 

\smallskip
$(1)$ \ $Q\not\sim Q$, $Q\sim Q_3$ and $Q\not\sim Q_i$ for $i=1,2,4$, or \vspace*{1mm}

$(2)$ \ $Q\not\sim Q$, $Q\sim Q_4$ and $Q\not\sim Q_i$ for $i=1,2,3$, or\vspace*{1mm}

$(3)$ \ $Q\sim Q$, $Q\sim Q_5$ and $Q\not\sim Q_i$ for $i=1,2,3,4$.\vspace*{1mm}

\noindent
In the first case we have $\{Q,Q_2\}$, $\{Q_1,Q_4\}$ and $\{Q_3,Q_5\}$; in the second $\{Q,Q_1\}$, $\{Q_2,Q_3\}$ and $\{Q_4,Q_5\}$; in the third $\{Q,Q_5\}$, $\{Q_1,Q_3\}$ and $\{Q_2,Q_4\}$.
\end{theorem}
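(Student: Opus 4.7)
The proof follows the template of Theorem \ref{TQ1}, splitting into converse and forward directions.

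\emph{Converse.} In case (1), the hypothesis $Q\sim Q_3$ together with Lemma \ref{L-27} yields $Q\approx Q_2$, $Q_1\approx Q_4$ and $Q_3\approx Q_5$, so the six parastrophes fall within the announced three pairs. The negated conditions $Q\not\sim Q$ and $Q\not\sim Q_i$ for $i=1,2,4$, interpreted through Lemmas \ref{L-24}, \ref{L-25}, \ref{L-26} and \ref{L-28}, then rule out every remaining $\approx$-relation between distinct parastrophes, because each such potential relation is, by these lemmas, equivalent to one of the four base anti-isotopies $Q\sim Q$, $Q\sim Q_1$ (equivalently $Q\sim Q_2$ by Lemma \ref{L-25}), $Q\sim Q_3$, $Q\sim Q_4$. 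Hence the three pairs are exactly the isotopy classes. Cases (2) and (3) are analogous, with Lemma \ref{L-28} and Lemma \ref{L-24} respectively supplying the triple packet of $\approx$-relations.

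\emph{Forward direction.} Assume $Q$ has exactly three isotopy classes and enumerate the subset $S$ of the four base anti-isotopies $\{Q\sim Q,\ Q\sim Q_1,\ Q\sim Q_3,\ Q\sim Q_4\}$ that hold. Lemmas \ref{L-24}--\ref{L-28} give a complete catalog: every $\approx$-relation between distinct parastrophes is equivalent to exactly one element of this list, and each element triggers a fixed packet of three $\approx$-relations. If $S=\emptyset$, the parastrophes are pairwise non-isotopic, giving six classes. The singleton $S=\{Q\sim Q_1\}$ collapses to two classes by Theorem \ref{TQ1}. The singletons $\{Q\sim Q\}$, $\{Q\sim Q_3\}$, $\{Q\sim Q_4\}$ produce exactly the partitions announced in (3), (1), (2). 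If $|S|\geq 2$, combining the two triple packets of $\approx$-relations connects all six parastrophes into a single class: in every such case one readily chases the chain to obtain both $Q\approx Q_5$ and $Q\approx Q_i$ for some $i\in\{1,2,3,4\}$, so Theorem \ref{TQ} applies and yields one class.

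The main obstacle is the ``$|S|\geq 2$ forces one class'' step, which requires a short chain of $\approx$-relations for each of the six unordered pairs of base anti-isotopies. Each chain is at most four $\approx$-relations long and readily shown to produce $Q\sim Q$ together with one of $Q\sim Q_i$, after which Theorem \ref{TQ} delivers the collapse. A slicker conceptual shortcut is to observe that the set of $\sigma\in S_3$ for which the corresponding parastrophe is isotopic to $Q$ forms a subgroup of $S_3$ (implicit in Lemmas \ref{L-24}--\ref{L-28}), so isotopy classes of parastrophes correspond to cosets and hence all have equal size; the admissible partition types then reduce to $(6), (3,3), (2,2,2), (1^6)$, and the three-class case is forced to be $(2,2,2)$, matching the three subgroups of $S_3$ of order two. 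The remainder of the argument is bookkeeping with the lemmas.
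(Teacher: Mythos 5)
Your proof is correct in substance, but it is organized quite differently from the paper's. The paper proves the forward direction by casing on which isotopy $Q\approx Q_i$ ($i=1,\dots,5$) holds, reading off from Lemmas \ref{L-24}--\ref{L-28} the induced anti-isotopy and the partition, discarding $Q\approx Q_3$ and $Q\approx Q_4$ because they force at most two classes, and dismissing the converse as obvious; you instead enumerate the subsets $S$ of the base anti-isotopies $\{Q\sim Q,\ Q\sim Q_1,\ Q\sim Q_3,\ Q\sim Q_4\}$ and compute the class structure for each $S$, which yields both implications at once and, in effect, the whole type table, and your aside that the permutations $\sigma$ with $Q^{\sigma}\approx Q$ form a subgroup of $S_3$ (so the isotopy classes are cosets of equal size and three classes force a $(2,2,2)$ partition indexed by the three involutions) is a genuinely different, more conceptual route that the paper never makes explicit. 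Your ``$|S|\geqslant 2$'' step does close as you claim: for instance $Q\sim Q_3$ and $Q\sim Q_4$ give $Q\approx Q_2$ and $Q\approx Q_1$, hence $Q_1\approx Q_2$, hence $Q\sim Q_1$ by Lemma \ref{L-25}, and then $Q\approx Q_3$ with Lemma \ref{L-22} gives $Q\sim Q$, so Theorem \ref{TQ} applies; the other pairs are similar or shorter. Two small inaccuracies should be repaired, though neither affects the conclusion: the packet attached to $Q\sim Q_1$ contains more than three isotopies ($Q\approx Q_3$, $Q\approx Q_4$, $Q_1\approx Q_2$, $Q_1\approx Q_5$, $Q_2\approx Q_5$), which is harmless only because you delegate that case to Theorem \ref{TQ1}; and the pair $Q_3\approx Q_4$ is the one isotopy between distinct parastrophes not listed in Lemmas \ref{L-24}--\ref{L-28}, so your ``complete catalog'' claim needs the one-line supplement that $Q_3\approx Q_4$ is equivalent to $Q\sim Q_1$ (via Lemma \ref{L-21}$(b)$ and Lemma \ref{L-22}, or via Lemma \ref{L-23}), or, in each concrete case, a transitivity argument such as: $Q_3\approx Q_4$ together with $Q_1\approx Q_4$ gives $Q_1\approx Q_3$, i.e.\ $Q\sim Q$, contradicting $Q\not\sim Q$.
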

\begin{proof}
Suppose that a quasigroup $Q$ has exactly three classes of isotopic parastro\-phes. From the above lemmas it follows that in this case $Q\approx Q_i$ for some $i$.

\smallskip
{\sc Case} $Q\approx Q_1$. Then, by Lemma \ref{L-28}, we have three classes $\{Q,Q_1\}$, $\{Q_2,Q_3\}$, $\{Q_4,Q_5\}$ and $Q\sim Q_4$. Since $Q_1\not\approx Q_3$ we also have $Q\not\sim Q$ (Lemma \ref{L-24}).

\smallskip
{\sc Case} $Q\approx Q_2$. In this case  $\{Q,Q_2\}$, $\{Q_1,Q_4\}$, $\{Q_3,Q_5\}$ and $Q\sim Q_3$ (Lemma \ref{L-27}). Analogously as in the previous case  $Q_1\not\approx Q_3$ gives $Q\not\sim Q$.

\smallskip
{\sc Case} $Q\approx Q_3$. This case is impossible because by Lemmas \ref{L-25} and \ref{L-26} it leads to two classes.

\smallskip
{\sc Case} $Q\approx Q_4$. Analogously as $Q\approx Q_3$.

\smallskip
{\sc Case} $Q\approx Q_5$. Then $Q_1\approx Q_3$, $Q_2\approx Q_4$ and $Q\sim Q$. Since classes 
 $\{Q,Q_5\}$, $\{Q_1,Q_3\}$, $\{Q_2,Q_5\}$ are disjoint $Q\not\sim Q_i$ for each $i=1,2,3,4$.

\smallskip
The converse statement is obvious.
\end{proof}

As a consequence of the above results we obtain

\begin{corollary}\label{C213}
Parastrophes of a quasigroup $Q$ are non-isotopic if and only if $Q\not\sim Q$ and $Q\not\sim Q_i$ for all $i=1,2,3,4$.
\end{corollary}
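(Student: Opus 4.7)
The plan is to reduce the corollary to an exhaustive accounting of the $\binom{6}{2}=15$ possible isotopies $Q_i\approx Q_j$ between parastrophes, showing that each such isotopy is equivalent to one of the five anti-isotopism conditions $Q\sim Q$ or $Q\sim Q_i$, $i=1,2,3,4$. Once this dictionary is established, the corollary follows immediately by contraposition.

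For the forward direction I simply read off the implications $Q\approx Q_5\Rightarrow Q\sim Q$ (Lemma~\ref{L-24}), $Q\approx Q_3\Rightarrow Q\sim Q_1$ (Lemma~\ref{L-26}(a)), $Q\approx Q_4\Rightarrow Q\sim Q_2$ (Lemma~\ref{L-26}(b)), $Q\approx Q_2\Rightarrow Q\sim Q_3$ (Lemma~\ref{L-27}), and $Q\approx Q_1\Rightarrow Q\sim Q_4$ (Lemma~\ref{L-28}). Pairwise non-isotopy of all parastrophes of $Q$ makes each of these left-hand sides fail, whence all five anti-isotopism conditions on the right fail.

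For the converse I argue contrapositively: I assume some pair $Q_i\approx Q_j$ and exhibit one of the five conditions. Fourteen of the fifteen pairs are immediate from Lemmas~\ref{L-24}--\ref{L-28} combined with Lemma~\ref{L-25}. Explicitly, the pairs $\{Q,Q_5\},\{Q_1,Q_3\},\{Q_2,Q_4\}$ force $Q\sim Q$; the pairs $\{Q,Q_3\},\{Q_1,Q_5\},\{Q_1,Q_2\},\{Q,Q_4\},\{Q_2,Q_5\}$ force $Q\sim Q_1$ (equivalently $Q\sim Q_2$, by Lemma~\ref{L-25}); the pairs $\{Q,Q_2\},\{Q_1,Q_4\},\{Q_3,Q_5\}$ force $Q\sim Q_3$; and the pairs $\{Q,Q_1\},\{Q_2,Q_3\},\{Q_4,Q_5\}$ force $Q\sim Q_4$.

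The only pair not directly covered by the existing lemmas is $\{Q_3,Q_4\}$, and this is the main obstacle. My approach is to chain the unconditional anti-isotopisms $Q_1\sim Q_3$ and $Q_2\sim Q_4$ of Lemma~\ref{L-21}(b) with the composition rules of Lemma~\ref{L-22}. From $Q_1\sim Q_3$ and $Q_3\approx Q_4$, part~(b) of Lemma~\ref{L-22} delivers $Q_1\sim Q_4$; combined with $Q_4\sim Q_2$ (symmetry of $\sim$ together with Lemma~\ref{L-21}(b)), part~(a) of Lemma~\ref{L-22} delivers $Q_1\approx Q_2$, which by Lemma~\ref{L-25} is equivalent to $Q\sim Q_1$. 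Thus $Q_3\approx Q_4\Rightarrow Q\sim Q_1$, completing the dictionary and hence the proof.
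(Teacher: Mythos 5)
Your proof is correct, and it amounts to making fully explicit what the paper leaves implicit: the paper presents Corollary \ref{C213} as a consequence of Theorems \ref{TQ}, \ref{TQ1} and \ref{TQ3} (i.e.\ of the classification of the one-, two- and three-class cases), whereas you bypass those theorems entirely and check all fifteen pairs $Q_i\approx Q_j$ directly against Lemmas \ref{L-24}--\ref{L-28}. Two remarks. First, in your forward direction you quote the implications in the form $Q\approx Q_5\Rightarrow Q\sim Q$, etc., and then infer failure of the right-hand sides from failure of the left-hand sides; as literally written this is the wrong direction of inference, but since Lemmas \ref{L-24}, \ref{L-26}, \ref{L-27} and \ref{L-28} are stated as equivalences the needed converse implications are available, so this is only a slip of citation, not a gap. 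Second, your handling of the one pair not covered by the displayed equivalences, $Q_3\approx Q_4$, by chaining $Q_1\sim Q_3$ and $Q_2\sim Q_4$ from Lemma \ref{L-21}$(b)$ through Lemma \ref{L-22} to get $Q_1\approx Q_2$ and then invoking Lemma \ref{L-25}, is sound (it also uses the easy symmetry of $\sim$, which the paper itself uses tacitly); in the paper this step is buried in the proof of Theorem \ref{TQ1}, where an isotopy between two parastrophes not involving $Q$ is reduced ``by previous lemmas'' to an isotopy $Q\approx Q_k$. What your route buys is a self-contained, purely lemma-based proof of the corollary, independent of the two- and three-class theorems; what the paper's route buys is economy, since its single case analysis yields at the same time the finer classification into types $A$--$F$.
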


\begin{corollary}\label{C214}
The number of non-isotopic parastrophes of a quasigroup $Q$ is always $1$, $2$, $3$, or $6$. 
\end{corollary}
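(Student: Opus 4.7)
The plan is to show that Theorems \ref{TQ}, \ref{TQ1}, \ref{TQ3} and Corollary \ref{C213} are jointly exhaustive: between them they characterize quasigroups with exactly $1$, $2$, $3$, and $6$ classes of isotopic parastrophes, so all that remains is to verify that no quasigroup escapes all four characterizations. I would split into cases based on the two booleans ``$Q\sim Q$'' and ``there exists $i\in\{1,2,3,4\}$ with $Q\sim Q_i$''.

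Three of the four combinations of these booleans are handled at once. If $Q\sim Q$ and $Q\sim Q_i$ for some $i\in\{1,2,3,4\}$, Theorem \ref{TQ} gives $1$ class. If $Q\sim Q$ but $Q\not\sim Q_i$ for every such $i$, this is exactly hypothesis $(3)$ of Theorem \ref{TQ3}, giving $3$ classes. If $Q\not\sim Q$ and $Q\not\sim Q_i$ for all such $i$, Corollary \ref{C213} gives $6$ classes.

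The remaining case is $Q\not\sim Q$ together with $Q\sim Q_i$ for at least one $i\in\{1,2,3,4\}$. Here I would first establish that exactly one of the three relations $Q\sim Q_1$ (equivalent to $Q\sim Q_2$ by Lemma \ref{L-25}), $Q\sim Q_3$, and $Q\sim Q_4$ can hold: if any two of them held simultaneously, a short chase through Lemmas \ref{L-26}--\ref{L-28} together with the composition rules of Lemma \ref{L-22} would yield an isotopism between $Q$ and the anti-isotopic partner in question (for instance, $Q\sim Q_1$ forces $Q\approx Q_3$ by Lemma \ref{L-26}(a), which combined with the hypothetical $Q\sim Q_3$ and Lemma \ref{L-22}(b) would give $Q\sim Q$), contradicting the hypothesis. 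With this exclusivity in hand, Theorem \ref{TQ1} applies in the case $Q\sim Q_1$ (yielding $2$ classes), while parts $(1)$ or $(2)$ of Theorem \ref{TQ3} apply in the cases $Q\sim Q_3$ or $Q\sim Q_4$ (yielding $3$ classes). The only step that is not a mechanical citation is this exclusivity check, but it reduces to three direct two-line applications of the already-proved lemmas.
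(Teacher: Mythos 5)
Your proposal is correct and is essentially the justification the paper intends: Corollary \ref{C214} is stated there as an immediate consequence of Theorems \ref{TQ}, \ref{TQ1}, \ref{TQ3} and Corollary \ref{C213}, and your four-way case split on ``$Q\sim Q$'' and ``$Q\sim Q_i$ for some $i\leqslant 4$'', together with the exclusivity check among $Q\sim Q_1$ ($\equiv Q\sim Q_2$), $Q\sim Q_3$, $Q\sim Q_4$, is exactly the exhaustiveness argument left implicit in the paper. The only caveat is that in the case $Q\sim Q_1$ you must invoke Theorem \ref{TQ1} in the form actually established in its proof (namely that $Q\not\sim Q$ and $Q\sim Q_1$ already force the two classes $\{Q,Q_3,Q_4\}$, $\{Q_1,Q_2,Q_5\}$), since the literal side condition ``$Q\not\sim Q_2$'' in its statement conflicts with Lemma \ref{L-25} and is evidently a misprint of the paper, not a gap in your argument.
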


Depending on the relationship between parastrophes quasigroups can be divided into six types presented below.

\medskip
$$
\begin{array}{|c|l|ll} \hline
{\rm type}&\ \ \  {\rm classes\; of\; isotoipic\; parastrophes}\rule{0mm}{3.5mm}\\ \hline
A&\{Q,Q_1,Q_2,Q_3,Q_4,Q_5\}\rule{0mm}{3.4mm}\\ \hline
B&\{Q,Q_3,Q_4\}, \ \ \{Q_1,Q_2,Q_5\}\rule{0mm}{3.4mm}\\ \hline
C&\{Q,Q_2\}, \ \ \{Q_1,Q_4\}, \ \ \{Q_3,Q_5\}\rule{0mm}{3.4mm}\\ \hline
D&\{Q,Q_1\}, \ \ \{Q_2,Q_3\}, \ \ \{Q_4,Q_5\}\rule{0mm}{3.4mm}\\ \hline
E&\{Q,Q_5\}, \ \ \{Q_1,Q_3\}, \ \ \{Q_2,Q_4\}\rule{0mm}{3.4mm}\\ \hline
F&\{Q\}, \ \{Q_1\}, \ \{Q_2\}, \ \{Q_3\}, \ \{Q_4\}, \ \{Q_5\}\rule{0mm}{3.4mm}\\ \hline
\end{array}
$$

\medskip
Our results are presented in the following table where "$+$" means that the corresponding relation holds. The symbol "$-$" means that this relation has no place.

\medskip
$$
\begin{array}{|c|c|c|c|c|c|c|c|c|c|}\hline
Q\sim Q  &+&-&-&-&-&+&-&Q\approx Q_5\rule{0mm}{3.4mm}\\ \hline
Q\sim Q_1&+&+&-&-&-&-&-&Q\approx Q_3\rule{0mm}{3.4mm}\\ \hline
Q\sim Q_2&+&-&+&-&-&-&-&Q\approx Q_4\rule{0mm}{3.4mm}\\ \hline
Q\sim Q_3&+&-&-&+&-&-&-&Q\approx Q_2\rule{0mm}{3.4mm}\\ \hline
Q\sim Q_4&+&-&-&-&+&-&-&Q\approx Q_1\rule{0mm}{3.4mm}\\ \hline
{\rm type}&A&B&B&C&D&E&F&\rule{0pt}{10pt}\\ \hline
\end{array}
$$

\medskip
The parastrophe $Q_5$ plays no role in our research since always is $Q\sim Q_5$.

\section*{\centerline{3. Parastrophes of selected quasigroups}}\setcounter{section}{3}\setcounter{theorem}{0}

In this section we present characterizations of parastrophes of several classical types of quasigroups. We start with parastrophes of IP-quasigroups.

As a consequence of our results, we get the following well-known fact (see for example  \cite{Bel})
\begin{proposition}\label{P31}
All parastrophes of an $IP$-quasigroup are isotopic.
\end{proposition}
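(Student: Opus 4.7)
The plan is to reduce the statement directly to Theorem~\ref{TQ}: an IP-quasigroup $Q$ will be shown to satisfy $Q\sim Q$ and $Q\sim Q_i$ for some $i\in\{1,2,3,4\}$, after which that theorem gives the conclusion at once.

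The starting point is to translate the IP-axioms into formulas for the divisional operations. The IP property provides permutations $\lambda,\rho\in S_Q$ with $\lambda(x)\cdot(xy)=y$ and $(yx)\cdot\rho(x)=y$ for all $x,y$; solving $xz=y$ against the first identity and $zx=y$ against the second yields the closed forms
$$
x\backslash y=\lambda(x)\cdot y, \qquad y/x=y\cdot\rho(x).
$$
With these expressions in hand, two explicit isotopisms are immediate. The triple $(\lambda^{-1},\mathrm{id},\mathrm{id})$ witnesses $Q\approx Q_1$, because $\lambda^{-1}(x)\backslash y=\lambda(\lambda^{-1}(x))\cdot y=xy$; the parallel choice $(\mathrm{id},\rho^{-1},\mathrm{id})$ witnesses $Q\approx Q_2$, because $x/\rho^{-1}(y)=x\cdot\rho(\rho^{-1}(y))=xy$.

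From $Q\approx Q_1\approx Q_2$ one has $Q_1\approx Q_2$ by transitivity of $\approx$, and Lemma~\ref{L-25} upgrades this equivalence to $Q\sim Q_1$. To close the loop I would then apply Lemma~\ref{L-22}(c) to the pair $Q\approx Q_1$ and $Q_1\sim Q$ (using symmetry of $\sim$), obtaining $Q\sim Q$. Both hypotheses of Theorem~\ref{TQ} are now satisfied with $i=1$, and the proposition follows.

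I do not expect any real obstacle: the entire argument reduces to rewriting the IP-identities in divisional form, exhibiting two one-line isotopisms, and applying three lemmas already proved. The only small point of care is to note that $\lambda$ and $\rho$ really are permutations of $Q$, which is automatic from quasigroup cancellation applied to the IP-identities themselves.
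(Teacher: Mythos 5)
Your proof is correct and follows essentially the same route as the paper: both read the IP-identities $\lambda(x)\cdot xy=y$ and $yx\cdot\rho(x)=y$ as explicit isotopisms giving $Q\approx Q_1$ and $Q\approx Q_2$, and then conclude via the Section~2 classification (the paper states ``$Q$ is of type $A$'' tersely, while you spell out the same deduction through Lemma~\ref{L-25}, Lemma~\ref{L-22}(c) and Theorem~\ref{TQ}). No gaps.
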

\begin{proof}
Indeed, in any $IP$-quasigroup $Q$ there are permutations $\alpha,\beta\in S_Q$ such that $\alpha(x)\cdot xy=y=yx\cdot\beta(x)$ for all $x,y\in Q$. So, $Q\approx Q_1\approx Q_2$, i.e., $Q$ is a quasigroup of type $A$.
\end{proof}

\begin{corollary}\label{C32}
Parastrophes of a group are isotopic.
\end{corollary}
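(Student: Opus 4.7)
The plan is to derive this corollary as an immediate consequence of Proposition \ref{P31}, by checking that every group is an $IP$-quasigroup. Given a group $(G,\cdot)$, I would take $\alpha$ and $\beta$ both equal to the inversion map $x\mapsto x^{-1}$, which is a permutation of $G$. Then associativity together with the defining property of inverses yields $\alpha(x)\cdot(xy)=x^{-1}(xy)=(x^{-1}x)y=y$ and $(yx)\cdot\beta(x)=(yx)x^{-1}=y(xx^{-1})=y$ for all $x,y\in G$. These are precisely the two identities required of an $IP$-quasigroup in the statement of Proposition \ref{P31}.

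Once the $IP$-property is in hand, Proposition \ref{P31} applies verbatim, so $G\approx G_1\approx G_2$, placing $G$ in type $A$ and making all six parastrophes mutually isotopic. There is no real obstacle: the only thing to notice is that the two $IP$-identities are built into the group axioms via a single fixed permutation (inversion), so no case analysis or appeal to the machinery of Section~2 is needed beyond citing the preceding proposition.
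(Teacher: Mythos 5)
Your proof is correct and follows the paper's own route: the corollary is stated as an immediate consequence of Proposition \ref{P31}, with the (implicit) observation that every group is an $IP$-quasigroup via the inversion map, which is exactly what you spell out.
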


The same is true for the parastrophes of Moufang quasigroups since groups and Moufang quasigroups are IP-quasigroups.

Also parastrophes of T-quasigroups, linear and alinear quasigroups (studied in \cite{BT}) are isotopic. 
This fact follows from more general result proved below.

\begin{theorem}\label{T33}
All parastrophes of a quasigroup isotopic to a group are isotopic.
\end{theorem}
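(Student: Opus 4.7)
The plan is to combine two prior results: (i) all parastrophes of a group are mutually isotopic (Corollary \ref{C32}, which follows from Proposition \ref{P31} since every group is an IP-quasigroup), and (ii) isotopy is inherited by corresponding parastrophes (Lemma \ref{L-23}(a)). Together these force every parastrophe of $Q$ into a single isotopism class.

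Concretely, suppose $Q\approx G$ for some group $G$. First I would apply Lemma \ref{L-23}(a) to the isotopy $Q\approx G$; this yields $Q_i\approx G_i$ for every $i\in\{1,2,3,4,5\}$. Next, Corollary \ref{C32} gives $G\approx G_i$ for each such $i$. Transitivity of $\approx$ then delivers
$$
Q \;\approx\; G \;\approx\; G_i \;\approx\; Q_i
$$
for every $i$, so every parastrophe of $Q$ is isotopic to $Q$, placing $Q$ in type $A$.

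There is essentially no obstacle, because the substantive work was done in Section~2 (transfer of isotopy across parastrophes, Lemma \ref{L-23}) and in Proposition \ref{P31} (all parastrophes of an IP-quasigroup are isotopic); the current statement is a short combination of these. An alternative route would be to verify the two sufficient conditions of Theorem \ref{TQ} --- namely $Q\sim Q$ and $Q\sim Q_i$ for some $i\in\{1,2,3,4\}$ --- by first observing that the inversion map on $G$ provides an anti-isotopism $G\sim G$ (via the identity $x^{-1}y^{-1}=(yx)^{-1}$), then combining this with $G\approx G_1$ (Corollary \ref{C32}) through Lemma \ref{L-22}(b) to get $G\sim G_1$, and finally transporting both relations from $G$ to $Q$ via Lemma \ref{L-23}(c). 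Both routes succeed; the direct route above is shorter.
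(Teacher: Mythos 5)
Your direct route is correct: $Q\approx G$ gives $Q_i\approx G_i$ for all $i$ by Lemma \ref{L-23}$(a)$, Corollary \ref{C32} (via Proposition \ref{P31}, groups being IP-quasigroups) gives $G\approx G_i$, and transitivity of $\approx$ yields $Q\approx Q_i$ for every $i$, which is the assertion; no circularity arises, since Proposition \ref{P31} and Lemma \ref{L-23} are established independently of Theorem \ref{T33}. This is close in spirit to the paper's argument but organized differently: the paper works by hand with the isotopism $\gamma(xy)=\alpha(x)\circ\beta(y)$ and the inversion anti-automorphism $\varphi$ of $G$, first producing an explicit anti-isotopism $Q\sim Q$, then computing $Q_1\approx G_1$ and $Q_2\approx G_2$ directly (essentially redoing the special cases of Lemma \ref{L-23}$(a)$ that it needs) and concluding that $Q$ is of type $A$. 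Your version dispenses with the computation of $Q\sim Q$ and with the type classification altogether, which makes it shorter and more modular; what the paper's computation buys in exchange is the explicit relation $Q\sim Q$ for quasigroups isotopic to groups, which ties such quasigroups to the table of types and to Theorem \ref{TQ}. One small caveat about your alternative route: Lemma \ref{L-23}$(c)$ as stated transports the relations $Q\sim Q_i$ and $Q\approx Q_i$ for $i=1,\dots,5$, so transporting $G\sim G$ to $Q\sim Q$ is not literally covered by it; you would pass through Lemma \ref{L-24} ($G\sim G\longleftrightarrow G\approx G_5$, then $Q\approx Q_5$ by Lemma \ref{L-23}, then $Q\sim Q$ again by Lemma \ref{L-24}). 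Since your primary argument does not need $Q\sim Q$ at all, this does not affect its validity.
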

\begin{proof}
 Let $G=(G,\circ)$ be a group. Then $\varphi(x\circ y)=\varphi(y)\circ\varphi(x)$ for $\varphi(x)=x^{-1}$. Since $(Q,\cdot)\approx (G,\circ)$, for some $\alpha,\beta,\gamma$ we have
$$
\gamma(xy)=\alpha(x)\circ\beta(y)=\varphi^{-1}(\varphi\beta(y)\circ\varphi\alpha(x))=\varphi^{-1}\gamma\left(\alpha^{-1}\varphi\beta(y)\cdot \beta^{-1}\varphi\alpha(x)\right).
$$
Thus $\,\gamma^{-1}\varphi\gamma(xy)=\alpha^{-1}\varphi\beta(y)\cdot \beta^{-1}\varphi\alpha(x)$. So, $Q\sim Q$. 

Moreover, from $\gamma(xy)=\alpha(x)\circ\beta(y)$ for $xy=z$ we obtain
$$
\alpha(x)\backslash\!\!\backslash \gamma(z)=\beta(x\backslash z) \ \ \ {\rm and} \ \ \ \gamma(z) /\!\!/\beta(y)=\alpha(z/y),
$$
where $\backslash\!\!\backslash$ and $/\!\!/$ are inverse operations in a group $G$. Thus $Q_1\approx G_1$ and $Q_2\approx G_2$. Since $G\approx G_1\approx G_2$, also $Q\approx Q_1\approx Q_2$. This shows that a quasigroup isotopic to a group is a quasigroup of type $A$. Hence (Lemma \ref{L-23}) all its parastrophes are isotopic.
\end{proof}

{\em $D$-loops} (called also {\em loops with anti-automorphic property}) are defined as loops with the property $(xy)^{-1}=y^{-1}x^{-1}$, where $x^{-1}$ denotes the inverse element \cite{Dlup}.

\begin{theorem}\label{Dlup}
Let $Q$ be a $D$-loop. Then

\smallskip
$(1)$ \ all parastrophes of $Q$ coincide with $Q$, or\vspace*{1mm}

$(2)$ \ $Q$ has three classes of isotopic parastrophes: $\{Q,Q_5\}$, $\{Q_1,Q_3\}$, $\{Q_2,Q_4\}$.\vspace*{1mm}

\noindent
The second case holds if and only if $Q\not\sim Q_1$ or $Q\not\approx Q_1$.
\end{theorem}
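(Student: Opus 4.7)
The key observation is that every $D$-loop automatically satisfies $Q\sim Q$. In any loop the inversion map $\varphi(x)=x^{-1}$ is a bijection on $Q$, and the $D$-loop identity $(xy)^{-1}=y^{-1}x^{-1}$ reads $\varphi(xy)=\varphi(y)\varphi(x)$. Taking $\alpha=\beta=\gamma=\varphi$ in the definition of anti-isotopism we obtain $\gamma(xy)=\alpha(y)\beta(x)$, which is exactly $Q\sim Q$. By Lemma \ref{L-24} this already yields $Q\approx Q_5$, $Q_1\approx Q_3$ and $Q_2\approx Q_4$, so at most the three classes $\{Q,Q_5\}$, $\{Q_1,Q_3\}$, $\{Q_2,Q_4\}$ appear, and the only question is whether any of them further merge.

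The plan is then a single case split on whether $Q\sim Q_1$ holds. If $Q\sim Q_1$, then since $Q\sim Q$ is already known, Theorem \ref{TQ} applies and all parastrophes of $Q$ are isotopic to $Q$, which is case $(1)$. If $Q\not\sim Q_1$, I would show that no two of the three tentative classes merge, i.e.\ that $Q\not\sim Q_i$ for $i=1,2,3,4$. Lemma \ref{L-25} handles $i=2$ immediately. For $i=3$, if $Q\sim Q_3$ held then Lemma \ref{L-27} would give $Q\approx Q_2$; combining this with $Q\sim Q$ via Lemma \ref{L-22}(c) would produce $Q\sim Q_2$ and hence $Q\sim Q_1$ by Lemma \ref{L-25}, a contradiction. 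The case $i=4$ is symmetric, using Lemma \ref{L-28} in place of Lemma \ref{L-27}. So by the classification table $Q$ is of type $E$ with exactly the three classes listed in case $(2)$.

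For the concluding equivalence, case $(2)$ occurs precisely when $Q\not\sim Q_1$. Under the standing hypothesis $Q\sim Q$, the two conditions $Q\sim Q_1$ and $Q\approx Q_1$ are in fact equivalent: one direction is Lemma \ref{L-22}(a) with $A=B=Q$ and $C=Q_1$, and the other is Lemma \ref{L-22}(c) with $A=Q_1$ and $B=C=Q$. Hence the statement ``$Q\not\sim Q_1$ or $Q\not\approx Q_1$'' is a disjunction of equivalent conditions and correctly describes case $(2)$. I do not expect a real obstacle here; the only care needed is to apply each transfer lemma in the correct direction while keeping the hypothesis $Q\sim Q$ available throughout the argument.
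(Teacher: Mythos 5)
Your proof is correct and follows essentially the same route as the paper: derive $Q\sim Q$ from the $D$-loop identity via the inversion map, conclude from the general classification that $Q$ is of type $A$ or type $E$, and observe that under $Q\sim Q$ the conditions $Q\sim Q_1$ and $Q\approx Q_1$ are equivalent, so the disjunction in the last claim characterizes case $(2)$; you merely make explicit (via Lemmas \ref{L-24}, \ref{L-25}, \ref{L-27}, \ref{L-28}, \ref{L-22} and Theorem \ref{TQ}) what the paper compresses into a citation of the Table and Lemmas \ref{L-26} and \ref{L-28}. The only caveat, shared with the paper's own proof, is that in case $(1)$ you establish that all parastrophes are isotopic to $Q$ rather than the literal wording that they coincide with $Q$.
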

\begin{proof}
Let $Q$ be a $D$-loop. Then $Q\sim Q$. Thus all its parastrophes are isotopic to $Q$ or they are divided into three classes $\{Q,Q_5\}$, $\{Q_1,Q_3\}, \{Q_2,Q_4\}$ (see Table). By Lemmas \ref{L-26} and \ref{L-28} they are disjoint if and only if $Q\not\sim Q_1$ or $Q\not\approx Q_1$.
\end{proof}

\begin{corollary}\label{C35}
A $D$-loop $Q$ has three classes of isotopic parastrophes if and only if  $Q\not\sim Q_2$ or $Q\not\approx Q_2$.
\end{corollary}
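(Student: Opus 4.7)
The plan is to reduce Corollary \ref{C35} to a statement about the logical equivalence of the two alternative conditions. By Theorem \ref{Dlup}, since $Q$ is a $D$-loop it already satisfies $Q\sim Q$, and it has three classes of isotopic parastrophes if and only if $Q\not\sim Q_1$ or $Q\not\approx Q_1$. So what I need to prove is
$$
(Q\not\sim Q_1 \ \text{or} \ Q\not\approx Q_1) \iff (Q\not\sim Q_2 \ \text{or} \ Q\not\approx Q_2),
$$
or equivalently, via the contrapositive,
$$
(Q\sim Q_1 \ \text{and} \ Q\approx Q_1) \iff (Q\sim Q_2 \ \text{and} \ Q\approx Q_2).
$$
I will establish the two conjuncts separately: $Q\sim Q_1\iff Q\sim Q_2$ and $Q\approx Q_1\iff Q\approx Q_2$.

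For the first equivalence, I will invoke Corollary \ref{C210}. Since $Q$ is a $D$-loop we have $Q\sim Q$, so the hypothesis of the corollary is met, and $Q\sim Q_i$ for any $i\in\{1,2,3,4\}$ forces $Q\sim Q_j$ for all $j\in\{1,2,3,4,5\}$. Applied with $i=1$ (resp.\ $i=2$) this immediately gives $Q\sim Q_1\iff Q\sim Q_2$.

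For the second equivalence, I will use Theorem \ref{Dlup} directly: a $D$-loop is either of type $A$, in which case all parastrophes are isotopic and in particular both $Q\approx Q_1$ and $Q\approx Q_2$, or else of type $E$ with the three separate classes $\{Q,Q_5\},\{Q_1,Q_3\},\{Q_2,Q_4\}$, in which case $Q_1$ and $Q_2$ each lie in a different class from $Q$, so neither $Q\approx Q_1$ nor $Q\approx Q_2$ holds. Hence $Q\approx Q_1\iff Q\approx Q_2$. Combining the two equivalences yields the required biconditional and completes the proof.

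No serious obstacle is expected; the entire argument is a bookkeeping exercise on top of Theorem \ref{Dlup} and Corollary \ref{C210}. The only place that needs a moment's care is making sure Corollary \ref{C210} applies (i.e., that the hypothesis $Q\sim Q$ is available), which is immediate from the $D$-loop identity $(xy)^{-1}=y^{-1}x^{-1}$ used in the proof of Theorem \ref{Dlup}.
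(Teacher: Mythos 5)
Your proof is correct. The paper states Corollary \ref{C35} without an explicit argument; the intended proof is evidently to repeat the proof of Theorem \ref{Dlup} with the roles of $Q_1$ and $Q_2$ interchanged, i.e.\ to note that since $Q\sim Q$ the parastrophes are either all isotopic or split into the classes $\{Q,Q_5\},\{Q_1,Q_3\},\{Q_2,Q_4\}$, and that these classes are disjoint if and only if $Q\not\sim Q_2$ or $Q\not\approx Q_2$, now by Lemmas \ref{L-26}$(b)$ and \ref{L-27} instead of \ref{L-26}$(a)$ and \ref{L-28}. You instead bootstrap off the $Q_1$-version already proved: you reduce the corollary to the equivalence $(Q\sim Q_1\ \text{and}\ Q\approx Q_1)\leftrightarrow(Q\sim Q_2\ \text{and}\ Q\approx Q_2)$, handling the anti-isotopy part via Corollary \ref{C210} (Lemma \ref{L-25} would even do it without using $Q\sim Q$) and the isotopy part via the type $A$/type $E$ dichotomy for quasigroups with $Q\sim Q$. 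Both routes are legitimate; yours makes the symmetry between the two formulations explicit at the cost of invoking the global classification, while the paper's implicit route stays entirely at the level of the pairwise lemmas (for the isotopy part one could also argue lemma-by-lemma: $Q\approx Q_1\leftrightarrow Q\sim Q_4$ and $Q\approx Q_2\leftrightarrow Q\sim Q_3$ by Lemmas \ref{L-28} and \ref{L-27}, and these are equivalent under $Q\sim Q$ by Corollary \ref{C210}).
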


In \cite{Dlup} is proved that parastrophes of a $D$-loop $Q$ are isomorphic to one of the quasigroups $Q$, $Q_1$, $Q_2$. Comparing this fact with our results we obtain

\begin{theorem}
For a $D$-loop $Q$ the following conditions are equivalent:

\smallskip
$(1)$ \ all parastrophes of $Q$ are isomorphic,\vspace*{1mm}

$(2)$ \ $Q$ and $Q_1$ are isomorphic,\vspace*{1mm}

$(3)$ \ $Q$ and $Q_2$ are isomorphic,\vspace*{1mm}

$(4)$ \ $Q_1$ and $Q_2$ are isomorphic.
\end{theorem}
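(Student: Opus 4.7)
The implication $(1) \Rightarrow (2), (3), (4)$ is immediate, so the substance will be the three converses. My plan rests on two observations.

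First, in any $D$-loop the map $\varphi(x) = x^{-1}$ satisfies $\varphi(xy) = \varphi(y)\varphi(x)$ by hypothesis. A direct unwinding of the definitions of $\backslash$ and $/$ will show that this single map provides three genuine isomorphisms
\[
\varphi : Q \to Q_5, \qquad \varphi : Q_1 \to Q_3, \qquad \varphi : Q_2 \to Q_4,
\]
consistent with the isotopy partition exhibited in Theorem~\ref{Dlup}. Second, isomorphism of quasigroups is preserved under taking parastrophes: if $f : X \to Y$ is a quasigroup isomorphism, then the same bijection $f$ is an isomorphism $X_i \to Y_i$ for every $i = 1, \ldots, 5$. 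This is immediate from the fact that $\circ_i$ is obtained by a fixed variable-permutation of the defining equation of $\cdot$\,, which any isomorphism respects.

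With these two ingredients, each of $(2), (3), (4) \Rightarrow (1)$ will follow by the same template. For $(2) \Rightarrow (1)$: given an isomorphism $Q \cong Q_1$, the second observation with $i = 2$ will yield $Q_2 \cong (Q_1)_2$; a short computation of $\sigma_2 \sigma_1$ identifies $(Q_1)_2 = Q_3$, so $Q_2 \cong Q_3$. Chaining this with the $\varphi$-isomorphisms $Q_1 \cong Q_3$, $Q_2 \cong Q_4$, $Q \cong Q_5$ and the hypothesis $Q \cong Q_1$, transitivity of $\cong$ will force all six parastrophes into a single isomorphism class, which is (1). The case $(3) \Rightarrow (1)$ is symmetric: apply the second observation with $i = 1$ to get $Q_1 \cong (Q_2)_1 = Q_4$, then combine with the $\varphi$-isomorphisms as before. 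For $(4) \Rightarrow (1)$: the second observation with $i = 1$ will yield $Q = (Q_1)_1 \cong (Q_2)_1 = Q_4$; combined with the $\varphi$-isomorphism $Q_2 \cong Q_4$ we recover $Q \cong Q_2$, reducing to the previous case.

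The main obstacle is the bookkeeping: correctly evaluating the permutation products $\sigma_j \sigma_i \in S_3$ to identify each $(Q_i)_j$ as one of the named parastrophes, and verifying the three $\varphi$-isomorphisms above by careful substitution into the definitions of $\backslash$ and $/$ using the anti-automorphic inverse property. The arithmetic is routine but a single miscomputed permutation product would collapse the chain of implications.
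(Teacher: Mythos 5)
Your argument is correct: the three $\varphi$-isomorphisms $Q\cong Q_5$, $Q_1\cong Q_3$, $Q_2\cong Q_4$ do follow from $(xy)^{-1}=y^{-1}x^{-1}$ by the substitutions you indicate, your ``second observation'' (an isomorphism $f:X\to Y$ is simultaneously an isomorphism $X_i\to Y_i$) is the isomorphism analogue of Lemma \ref{L-23}(a) and is easy to verify, and the permutation bookkeeping you flag as the main risk is in fact already done for you by the paper's definitions, which state $Q_3=(Q_1)_2$, $Q_4=(Q_2)_1$, and $(Q_1)_1=Q$; chaining these gives each of $(2),(3),(4)\Rightarrow(1)$ exactly as you describe. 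The paper itself gives no written proof: it invokes the result of \cite{Dlup} that every parastrophe of a $D$-loop is isomorphic to one of $Q$, $Q_1$, $Q_2$ (which is precisely your first observation, i.e.\ the $\varphi$-isomorphisms) and then says the theorem follows ``by comparing with our results,'' meaning the Lemmas of Section 2 read with $\alpha=\beta=\gamma$, so that, e.g., $Q\cong Q_1$ forces $Q_2\cong Q_3$ and $Q_4\cong Q_5$ via the anti-isomorphism versions of Lemmas \ref{L-26}--\ref{L-28}. Your route replaces that machinery by the more elementary transport-of-isomorphism step, which makes the proof self-contained (no appeal to \cite{Dlup} or to the anti-isotopy lemmas), at the modest cost of re-deriving directly what the citation supplies; both arguments are sound and yield the same chain of isomorphisms among the six parastrophes.
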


\begin{example}\label{Ex1}
Consider the following three loops.\\

\noindent\hspace*{-3mm}
\begin{tabular}{lcrcccc}
$
\begin{array}{c|ccccccccccc}
\cdot&1&2&3&4&5&6\\ \hline
1&1&2&3&4&5&6\rule{0pt}{10pt}\\
2&2&1&6&5&3&4\\
3&3&6&1&2&4&5\\
4&4&5&2&1&6&3\\
5&5&3&4&6&1&2\\
6&6&4&5&3&2&1
\end{array}
$ 
& \
$
\begin{array}{c|ccccccccccc}
\circ_1&1&2&3&4&5&6\\ \hline
1&1&2&3&4&5&6\rule{0pt}{10pt}\\
2&2&1&5&6&4&3\\
3&3&4&1&5&6&2\\
4&4&3&6&1&2&5\\
5&5&6&2&3&1&4\\
6&6&5&4&2&3&1
\end{array}
$ \
& 
$
\begin{array}{c|ccccccccccc}
\circ_2&1&2&3&4&5&6\\ \hline
1&1&2&3&4&5&6\rule{0pt}{10pt}\\
2&2&1&4&3&6&5\\
3&3&5&1&6&2&4\\
4&4&6&5&1&3&2\\
5&5&4&6&2&1&3\\
6&6&3&2&5&4&1
\end{array}
$
\end{tabular}

\bigskip
The first loop is a D-loop, the second and the third are parastrophes of the first. They are not D-loops and are not isotopic to the first. So this D-loop has three classes of isotopic parastrophes. In this case $Q=Q_5$, $Q_1=Q_3$ and $Q_2=Q_4$.\qed
\end{example}

\section*{\centerline{4. Some consequences}}\setcounter{section}{4}\setcounter{theorem}{0}

Note first of all that the proofs of our results remain true also for the case when $\alpha=\beta=\gamma$.
In this case an anti-isotopism is an anti-isomorphism and an isotopism is an isomorphism. So, the above results will be true if we replace an anti-isotopism by an anti-isomorphism, and an isotopism by an isomorphism. Moreover, an isotopism of parastrophes can be characterized by the identities:
\begin{eqnarray}
\alpha_1(x)\cdot\beta_1(yx)=\gamma_1(y), \label{eq2}\\[2pt]
\beta_2(xy)\cdot\alpha_2(x)=\gamma_2(y), \label{eq3}\\[2pt]
\beta_3(yx)\cdot\alpha_3(x)=\gamma_3(y), \label{eq4}\\[2pt]
\alpha_4(x)\cdot\beta_4(xy)=\gamma_4(y), \label{eq1}\\[2pt]
\beta_5(xy)=\gamma_5(y)\cdot\alpha_5(x), \label{eq5}
\end{eqnarray}
where $\alpha_i,\beta_i,\gamma_i$ are fixed permutations of the set $Q$.

\smallskip
Namely, from our results it follows that 
$$
\begin{array}{lc}
Q \ \ {\rm satisfies} \ \eqref{eq2}\longleftrightarrow Q_1\sim Q\longleftrightarrow Q_3\approx Q,\\[4pt]
Q \ \ {\rm satisfies} \ \eqref{eq3}\longleftrightarrow Q_2\sim Q\longleftrightarrow Q_4\approx Q,\\[4pt]
Q \ \ {\rm satisfies} \ \eqref{eq4}\longleftrightarrow Q_3\sim Q\longleftrightarrow Q_2\approx Q,\\[4pt]
Q \ \ {\rm satisfies} \ \eqref{eq1}\longleftrightarrow Q_4\sim Q\longleftrightarrow Q_1\approx Q,\\[4pt]
Q \ \ {\rm satisfies} \ \eqref{eq5}\longleftrightarrow Q\sim Q\longleftrightarrow Q_5\approx Q.
\end{array}
$$

Lemma \ref{L-23} shows that these identities are universal in some sense, i.e., if one of these identities is satisfied in a quasigroup $Q$, then  in a quasigroup isotopic to $Q$ is satisfied the identity of the same type, i.e., it is satisfied with other permutations.

\medskip
Since $Q\sim Q_1\longleftrightarrow Q\sim Q_2$ we have

\begin{proposition}\label{P41} A quasigroup $Q$ satisfies for some $\alpha_1,\beta_1,\gamma_1\in S_Q$ the identity \eqref{eq2} if and only if for some $\alpha_2,\beta_2,\gamma_2\in Q_S$ it satisfies the identity \eqref{eq3}.
\end{proposition}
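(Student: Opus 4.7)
The plan is to combine the two dictionary entries listed immediately before the proposition with Lemma \ref{L-25}. The translations asserted in the paragraph preceding the statement are
\[
Q\text{ satisfies }\eqref{eq2}\ \Longleftrightarrow\ Q_1\sim Q,\qquad Q\text{ satisfies }\eqref{eq3}\ \Longleftrightarrow\ Q_2\sim Q,
\]
so the proposition reduces to the equivalence $Q\sim Q_1 \Longleftrightarrow Q\sim Q_2$, which is exactly the first half of Lemma \ref{L-25}.

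To be self-contained I would briefly re-derive each dictionary entry. For \eqref{eq2}, unfold $Q\sim Q_1$ as the existence of $\alpha,\beta,\gamma\in S_Q$ with $\alpha(x)\circ_1\beta(y)=\gamma(y\cdot x)$. Using $u\circ_1 v=u\backslash v$, this becomes $\alpha(x)\backslash\beta(y)=\gamma(yx)$, and applying $\alpha(x)$ on the left yields $\alpha(x)\cdot\gamma(yx)=\beta(y)$. Setting $(\alpha_1,\beta_1,\gamma_1):=(\alpha,\gamma,\beta)$ gives \eqref{eq2}, and every step reverses. For \eqref{eq3}, the same procedure starting from $Q\sim Q_2$ and using $u\circ_2 v=u/v$ rewrites $\alpha(x)/\beta(y)=\gamma(yx)$ as $\gamma(yx)\cdot\beta(y)=\alpha(x)$; after relabeling $x\leftrightarrow y$ and setting $(\alpha_2,\beta_2,\gamma_2):=(\beta,\gamma,\alpha)$, this is \eqref{eq3}, and again the transformation is reversible.

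With both translations established, chaining them with Lemma \ref{L-25} closes the proof: $Q$ satisfies \eqref{eq2} iff $Q\sim Q_1$ iff $Q\sim Q_2$ iff $Q$ satisfies \eqref{eq3}. There is no real obstacle here; the only content is the two short manipulations converting division-language into dot-language, and the entire nontrivial work has already been absorbed into Lemma \ref{L-25}.
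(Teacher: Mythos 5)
Your proof is correct and follows the paper's own route: the paper justifies Proposition \ref{P41} precisely by the dictionary stated just before it (identity \eqref{eq2} corresponds to $Q\sim Q_1$, identity \eqref{eq3} to $Q\sim Q_2$) together with Lemma \ref{L-25}. Your only addition is to spell out the two unfoldings of the anti-isotopy conditions, which the paper leaves implicit, and those computations are accurate.
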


As a consequence we obtain the following classification of quasigroups>

\begin{theorem}\label{T42}
Let $Q$ be a quasigroup. Then \vspace{-1mm}
\begin{enumerate}
\item[$\bullet$] \ $Q$ is type $A$ if and only if it satisfies all of the identities $\eqref{eq2}- \eqref{eq5}$,\vspace{-2mm}
\item[$\bullet$] \ $Q$ is type $B$ if and only if it satisfies only \eqref{eq2} and \eqref{eq3},\vspace{-2mm}
\item[$\bullet$] \ $Q$ is type $C$ if and only if it satisfies only \eqref{eq4},\vspace{-2mm}
\item[$\bullet$] \ $Q$ is type $D$ if and only if it satisfies only \eqref{eq1},\vspace{-2mm}
\item[$\bullet$] \ $Q$ is type $E$ if and only if it satisfies only \eqref{eq5},\vspace{-2mm}
\item[$\bullet$] \ $Q$ is type $F$ if and only if it satisfies none of the identities $\eqref{eq2}-\eqref{eq5}$.
\end{enumerate}
\end{theorem}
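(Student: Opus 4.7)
The plan is to read Theorem \ref{T42} directly off the six-column table at the end of Section 2 by translating its rows through the five equivalences
$$\eqref{eq2}\longleftrightarrow Q\sim Q_1, \ \ \eqref{eq3}\longleftrightarrow Q\sim Q_2, \ \ \eqref{eq4}\longleftrightarrow Q\sim Q_3, \ \ \eqref{eq1}\longleftrightarrow Q\sim Q_4, \ \ \eqref{eq5}\longleftrightarrow Q\sim Q$$
recorded in the discussion just before Proposition \ref{P41}. Under this dictionary the five identities and the five rows of the table become interchangeable, so each of the six types is characterized by the pattern of ``$+$''s in its column.

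With the dictionary in place the argument is a case-by-case inspection. For type $A$ every row of its column carries ``$+$'', so all five identities must hold; conversely the five identities give in particular $Q\sim Q$ together with $Q\sim Q_1$, and Theorem \ref{TQ} places $Q$ in type $A$. For types $C$, $D$, $E$ each column contains a single ``$+$'' in the rows $Q\sim Q_3$, $Q\sim Q_4$, $Q\sim Q$ respectively, translating to exactly \eqref{eq4}, \eqref{eq1}, \eqref{eq5}; the three converses are supplied by Theorem \ref{TQ3}. For $F$ the column has no ``$+$'' at all, and Corollary \ref{C213} provides the equivalence with the simultaneous failure of all five identities.

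Type $B$ is the only case requiring a non-mechanical comment. The table displays two distinct columns labelled $B$, one with $Q\sim Q_1$ alone and the other with $Q\sim Q_2$ alone; however these two conditions are equivalent by Lemma \ref{L-25}, or equivalently by Proposition \ref{P41}. So the two columns collapse into the single statement that precisely \eqref{eq2} and \eqref{eq3} are satisfied, with the converse furnished by Theorem \ref{TQ1}. The main obstacle worth naming is just this bookkeeping pitfall: one must keep in mind that \eqref{eq2} and \eqref{eq3} are never independent, so the five identities behave effectively like four independent flags when carving out the six types, which is exactly why the partition has six parts rather than $2^5$.
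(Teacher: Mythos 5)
Your proposal is correct and follows essentially the same route as the paper, which derives Theorem \ref{T42} directly from the dictionary between the identities \eqref{eq2}--\eqref{eq5} and the relations $Q\sim Q_i$, $Q\sim Q$, combined with the classification table built from Theorems \ref{TQ}, \ref{TQ1}, \ref{TQ3} and Corollary \ref{C213}. Your explicit collapsing of the two type-$B$ columns via Lemma \ref{L-25} (Proposition \ref{P41}) is exactly the intended reading and, if anything, makes the argument more explicit than the paper's ``as a consequence'' treatment.
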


If all permutations used in $\eqref{eq2}-\eqref{eq5}$ are the identity permutations, then these equations have of the form:
\begin{eqnarray}
x\cdot yx=y, \label{eq22}\\
xy\cdot x=y, \label{eq23}\\
yx\cdot x=y, \label{eq24}\\
x\cdot xy=y, \label{eq21}\\
xy=yx. \label{eq25}
\end{eqnarray}
\noindent
Basing on our results we conclude that 
$$
\begin{array}{lc}
Q \ {\rm satisfies} \ \eqref{eq22}\longleftrightarrow Q=Q_4,\\[4pt]
Q \ {\rm satisfies} \ \eqref{eq23}\longleftrightarrow Q=Q_3,\\[4pt]
Q \ {\rm satisfies} \ \eqref{eq24}\longleftrightarrow Q=Q_2,\\[4pt]
Q \ {\rm satisfies} \ \eqref{eq21}\longleftrightarrow Q=Q_1,\\[4pt]
Q \ {\rm satisfies} \ \eqref{eq25}\longleftrightarrow Q=Q_5.
\end{array}
$$

Since $Q$ satisfies $\eqref{eq23}\longleftrightarrow Q_5=Q_2\longleftrightarrow ((Q_1)_2)_1=Q_2\longleftrightarrow Q_1=((Q_2)_1)_2\longleftrightarrow Q_1=Q_5\longleftrightarrow Q$ satisfies \eqref{eq22}, we see that identities \eqref{eq23} and \eqref{eq22} are equivalent, i.e., $Q$ satisfies \eqref{eq23} if and only if it satisfies \eqref{eq22}.

As a consequence we obtain the stronger version of Theorem 4 in \cite{Lind}.

\begin{theorem} Parastrophes of a quasigroup $Q$ can be characterized by the identities $\eqref{eq22}-\eqref{eq25}$ in the following way: 

\smallskip
$\bullet$ \ $Q=Q_i$ for $1\leqslant i\leqslant 5$ 
if and only if it satisfies all of the identities $\eqref{eq22}-\eqref{eq25}$,

\smallskip
$\bullet$ \ $Q=Q_3=Q_4$, \ $Q_1=Q_2=Q_5$ \ if and only if $Q$ satisfies only \eqref{eq23} and \eqref{eq22},

\smallskip
$\bullet$ \ $Q=Q_2$, \ $Q_1=Q_4$, \ $Q_3=Q_5$ \ if and only if $Q$ satisfies only \eqref{eq24},

\smallskip
$\bullet$ \ $Q=Q_1$, \ $Q_2=Q_3$, \ $Q_4=Q_5$ \ if and only if $Q$ satisfies only \eqref{eq21},

\smallskip
$\bullet$ \ $Q=Q_5$, \ $Q_1=Q_3$, \ $Q_2=Q_4$ \ if and only if $Q$ satisfies only \eqref{eq25},

\smallskip
$\bullet$ \ $Q\ne Q_i\ne Q_j$ for all $1\leqslant i<j\leqslant 5$ 
if and only if $Q$ satisfies none of the  

\hspace*{4mm}identities $\eqref{eq22} -\eqref{eq25}$.
\end{theorem}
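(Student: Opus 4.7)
The plan is to exploit the group-theoretic structure behind parastrophes. The assignment $i \mapsto \sigma_i$ (with $\sigma_0 = e$) identifies the multiset $\{Q, Q_1, \dots, Q_5\}$ with the $S_3$-orbit of $Q$ under the standard action of $S_3$ on the three arguments of the graph $\{(x,y,z) : xy = z\}$, and the set $H = \{\sigma \in S_3 : Q_\sigma = Q\}$ is then a subgroup of $S_3$. Since $S_3$ has exactly six subgroups---$\{e\}$, the three transposition subgroups $\langle\sigma_1\rangle$, $\langle\sigma_2\rangle$, $\langle\sigma_5\rangle$, the alternating group $A_3 = \langle\sigma_3\rangle = \langle\sigma_4\rangle$, and $S_3$ itself---each quasigroup falls into exactly one of six cases according to which stabilizer $H$ occurs. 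The six bullets in the statement correspond bijectively to these six subgroups.

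First, I would recall the five equivalences already established in the paragraph just before the theorem: $Q$ satisfies \eqref{eq22}$\iff Q = Q_4$, $Q$ satisfies \eqref{eq23}$\iff Q = Q_3$, $Q$ satisfies \eqref{eq24}$\iff Q = Q_2$, $Q$ satisfies \eqref{eq21}$\iff Q = Q_1$, and $Q$ satisfies \eqref{eq25}$\iff Q = Q_5$. Since $\sigma_3 = \sigma_4^{-1}$, the equality $Q_\sigma = Q$ is stable under inversion, so $Q = Q_3 \iff Q = Q_4$; this recovers the author's remark that \eqref{eq22} and \eqref{eq23} are equivalent. These equivalences say that the set of indices $i \in \{1,\ldots,5\}$ for which $Q = Q_i$ is exactly the image of $H \setminus \{e\}$ under $\sigma \mapsto i$, so knowing which identities hold is equivalent to knowing $H$.

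Next, for each of the six possible subgroups $H$, I would use Lemma \ref{L-21}(c)--(e) to propagate the stabilizer equalities out to coincidences among the other parastrophes. For $H = \langle\sigma_5\rangle$, Lemma \ref{L-21}(c) converts $Q = Q_5$ into $Q_1 = Q_3$ and $Q_2 = Q_4$, producing the three-class partition in the fifth bullet. The cases $H = \langle\sigma_1\rangle$ and $H = \langle\sigma_2\rangle$ are handled identically using parts (d) and (e), giving the fourth and third bullets. For $H = A_3$, once $Q = Q_3 = Q_4$ is established, applying $\sigma_1$ (which conjugates $\sigma_3$ to $\sigma_4$ inside $S_3$) sends the orbit $\{Q, Q_3, Q_4\}$ to $\{Q_1, Q_5, Q_2\}$, forcing $Q_1 = Q_2 = Q_5$ and giving the second bullet. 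The cases $H = \{e\}$ and $H = S_3$ give the last and first bullets directly.

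The main point requiring care---and the step I expect to be the chief obstacle---is the converse direction: verifying that in each case the parastrophes \emph{not} claimed equal really are distinct, i.e., that if only the listed identities hold then no stronger coincidence is forced. This reduces to noting that any extra equality $Q_i = Q_j$ would inject $\sigma_j\sigma_i^{-1}$ into $H$, enlarging $H$ beyond what our identity hypotheses pin down; since the six subgroups of $S_3$ are pairwise distinct, this contradicts the minimality of $H$ recorded by the hypothesis that \emph{only} the stated identities hold. With this minimality check in hand, the six bullets are just the six possible orbit patterns of $S_3/H$ acting on the parastrophe multiset, and the theorem follows.
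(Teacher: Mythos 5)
Your proposal is correct, but it follows a different route from the paper. The paper offers essentially no separate proof of this theorem: it is presented as an immediate consequence of the preceding machinery, namely the equivalences listed just before the statement ($Q$ satisfies \eqref{eq22} $\leftrightarrow Q=Q_4$, etc., obtained by specializing the isotopy results to identity permutations, i.e., replacing isotopy by equality), together with Lemma \ref{L-21} and the type classification of Theorem \ref{T42}; the equivalence of \eqref{eq22} and \eqref{eq23} is checked by a short direct computation with $((Q_1)_2)_1$. You instead organize the whole argument around the $S_3$-action on parastrophes: the stabilizer $H=\{\sigma: Q_\sigma=Q\}$ is a subgroup of $S_3$, the six subgroups of $S_3$ give the six possible coincidence patterns (cosets of $H$), Lemma \ref{L-21}(c)--(e) identifies the explicit patterns, and minimality of $H$ rules out extra coincidences. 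This is in the spirit of the Lindner--Steedly divisor-of-six theorem rather than of the paper's isotopy classification. What your approach buys is a self-contained structural explanation of why exactly these six patterns occur (and it gives the equivalence of \eqref{eq22} and \eqref{eq23} for free, since $\sigma_3=\sigma_4^{-1}$ and $H$ is closed under inversion); what the paper's route buys is brevity, since after Section 2 the theorem is just the equality specialization of the already proved type table. The only points you assert without proof --- that $\sigma\mapsto Q_\sigma$ is an action so that $H$ is a subgroup, and the composition bookkeeping $(Q_\sigma)_\tau=Q_{\tau\sigma}$ --- are standard and easily checked, so I see no genuine gap.
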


\begin{corollary}
Parastrophes of a commutative quasigroup $Q$ coincide with $Q$ or are divided into three classes:  
$\{Q=Q_5\}$, \ $\{Q_1=Q_3\}$, \ $\{Q_2=Q_4\}$.
\end{corollary}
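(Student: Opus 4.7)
The plan is to use commutativity to collapse the six parastrophes into at most three candidate blocks, and then to show that any further merger forces all six to coincide with $Q$. The whole argument stays at the level of the identity-based equivalences collected in Lemma~\ref{L-21}.

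First I would apply Lemma~\ref{L-21}(c): the identity $xy = yx$ is equivalent to the three simultaneous equalities $Q = Q_5$, $Q_1 = Q_3$, $Q_2 = Q_4$. Consequently, for a commutative quasigroup the six parastrophes immediately collapse into at most the three blocks $\{Q, Q_5\}$, $\{Q_1, Q_3\}$, $\{Q_2, Q_4\}$ that appear in the statement of the corollary. After this step only three representatives $Q$, $Q_1$, $Q_2$ remain, and it suffices to decide whether they are pairwise distinct or all equal.

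Second, I would rule out any \emph{partial} collapse by checking the three possible pairwise collisions among $Q$, $Q_1$, $Q_2$. If $Q = Q_1$, Lemma~\ref{L-21}(d) yields $Q_2 = Q_3$, which combined with the commutativity-consequence $Q_1 = Q_3$ gives $Q_2 = Q_1 = Q$; together with $Q_4 = Q_2$ and $Q_5 = Q$ this forces all six parastrophes to equal $Q$. Symmetrically, $Q = Q_2$ combined with Lemma~\ref{L-21}(e) gives $Q_1 = Q_4 = Q_2 = Q$, again a total collapse. Finally, the remaining merger $Q_1 = Q_2$, together with $Q_1 = Q_3$, yields $Q_2 = Q_3$, which by Lemma~\ref{L-21}(d) forces $Q = Q_1$, reducing to the first case.

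The only slightly nonroutine step is the third collision $Q_1 = Q_2$, which is not directly listed in Lemma~\ref{L-21}; the key observation is that commutativity converts it into $Q_2 = Q_3$, which is covered. Once these three cases are dispensed with, the dichotomy of the corollary follows: either the three blocks $\{Q, Q_5\}$, $\{Q_1, Q_3\}$, $\{Q_2, Q_4\}$ are pairwise disjoint---giving exactly three classes of equal parastrophes---or they merge completely, in which case every parastrophe coincides with $Q$.
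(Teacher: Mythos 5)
Your proof is correct, and it takes a genuinely different (and more elementary) route than the paper. The paper obtains this corollary as an immediate consequence of the preceding theorem (the equality version of the isotopy classification, i.e.\ the strengthened Theorem~4 of \cite{Lind}): a commutative quasigroup satisfies identity \eqref{eq25}, so by that theorem it is either in the ``all identities'' case (every parastrophe equals $Q$) or in the ``only \eqref{eq25}'' case, which is exactly the partition $\{Q=Q_5\}$, $\{Q_1=Q_3\}$, $\{Q_2=Q_4\}$. You instead argue directly from Lemma~\ref{L-21} alone: part (c) collapses the six parastrophes to the three blocks with representatives $Q,Q_1,Q_2$, and then parts (d) and (e), combined with $Q_1=Q_3$ and $Q_2=Q_4$, show that each of the three possible collisions $Q=Q_1$, $Q=Q_2$, $Q_1=Q_2$ forces all six parastrophes to coincide with $Q$ (your handling of $Q_1=Q_2$ via $Q_2=Q_3$ and Lemma~\ref{L-21}(d) is the right trick, and it also rules out the intermediate possibility of exactly two classes). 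What your argument buys is self-containedness: it needs none of the classification machinery of Sections~2 and~4. What the paper's route buys is that, once the classification theorem is in place, the corollary is free and comes packaged with the extra information about which of the identities \eqref{eq22}--\eqref{eq25} hold in each case; the subsequent corollary on the equivalent conditions $(1)$--$(5)$ is in the same spirit as your collision analysis.
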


\begin{corollary}
For a commutative quasigroup $Q$ the following conditions are equivalent:

\smallskip
$(1)$ \ all parastrophes of $Q$ coincide with $Q$,\vspace*{1mm}

$(2)$ \ $Q=Q_1$,\vspace*{1mm}

$(3)$ \ $Q=Q_2$,\vspace*{1mm}

$(4)$ \ $Q_1=Q_2$,\vspace*{1mm}

$(5)$ \ $Q$ satisfies at least one of the identities $\eqref{eq22}-\eqref{eq21}$. 
\end{corollary}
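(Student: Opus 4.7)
The plan is to anchor every equivalence on condition (2), $Q = Q_1$, using commutativity to collapse parastrophes pairwise. By Lemma \ref{L-21}(c), commutativity yields the three equalities $Q = Q_5$, $Q_1 = Q_3$, and $Q_2 = Q_4$, identifying three of the six parastrophes with the other three; this is the single fact that will power everything.

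First I would show (2) $\Rightarrow$ (1): from $Q = Q_1$ Lemma \ref{L-21}(d) gives $Q_2 = Q_3$ and $Q_4 = Q_5$, and when combined with the commutativity identifications, all six parastrophes coincide. The reverse (1) $\Rightarrow$ (2) is trivial. For (2) $\Leftrightarrow$ (3), I would apply Lemma \ref{L-21}(d) in one direction and Lemma \ref{L-21}(e) in the other, each time feeding in one of the commutativity equalities; for example, $Q = Q_2$ together with Lemma \ref{L-21}(e) yields $Q_1 = Q_4 = Q_2 = Q$. For (2) $\Leftrightarrow$ (4), the direction (2) $\Rightarrow$ (4) is immediate, while from $Q_1 = Q_2$ and $Q_1 = Q_3$ we get $Q_2 = Q_3$, whence Lemma \ref{L-21}(d) forces $Q = Q_1$.

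For (2) $\Leftrightarrow$ (5), the key observation is that in a commutative quasigroup each of the identities \eqref{eq22}, \eqref{eq23}, \eqref{eq24}, \eqref{eq21} reduces to the single identity $x \cdot xy = y$: identities \eqref{eq22} and \eqref{eq24} collapse to it by rewriting $yx$ as $xy$, and \eqref{eq23} does so by also using $(xy)\cdot x = x \cdot (xy)$. Thus satisfying at least one of \eqref{eq22}--\eqref{eq21} is equivalent to satisfying \eqref{eq21} alone, which by the preceding theorem is equivalent to $Q = Q_1$.

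There is no genuine obstacle here: every step is a direct substitution into a preceding lemma. The only care needed is bookkeeping, namely citing the correct part (d) versus (e) of Lemma \ref{L-21} when juggling the equivalences between $Q = Q_1$ and $Q = Q_2$, and verifying that the three non-trivial rewrites of \eqref{eq22}--\eqref{eq24} really do all land on \eqref{eq21} once $xy = yx$ is in force.
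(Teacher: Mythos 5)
Your proof is correct and follows essentially the same route as the paper: commutativity gives $Q=Q_5$, $Q_1=Q_3$, $Q_2=Q_4$ via Lemma \ref{L-21}(c), and the remaining identifications come from the equivalences of Lemma \ref{L-21}(d),(e) together with the fact that \eqref{eq21} characterizes $Q=Q_1$. You are in fact somewhat more complete than the paper, which proves only $(1)\longleftrightarrow(2)$ (using the composition identity $Q_5=((Q_2)_1)_2$ instead of Lemma \ref{L-21}(d)) and leaves the other equivalences, including the reduction of \eqref{eq22}--\eqref{eq21} to a single identity under commutativity, to the reader.
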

\begin{proof}
We prove only the equivalence $(1)\longleftrightarrow (2)$. Other equivalences can be proved in a similar way.

For a commutative $Q$ we have $Q=Q_5$, \ $Q_1=Q_3$, \ $Q_2=Q_4$. If $Q=Q_1$, then
$Q=Q_1=Q_3=Q_5$. Hence $Q_1=Q_5=((Q_2)_1)_2$ which gives $(Q_1)_2=(Q_2)_1$. So, $Q_3=Q_4$, i.e., $(2)$ implies $(1)$. The converse implication is obvious.
\end{proof}
\begin{corollary}
Parastrophes of a boolean group coincide with this group. 
\end{corollary}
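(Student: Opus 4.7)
The plan is to invoke the previous corollary, which tells us that for a commutative quasigroup all parastrophes coincide with $Q$ as soon as any one of the identities $\eqref{eq22}$--$\eqref{eq21}$ is satisfied. So it suffices to verify commutativity plus one such identity in a boolean group.

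First I would recall that a boolean group is, by definition, a group in which $x^2 = e$ for every $x$, so every element is its own inverse. From $(xy)^2 = e$ one deduces $xy = (xy)^{-1} = y^{-1}x^{-1} = yx$, giving commutativity. So $Q$ is a commutative quasigroup.

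Next I would check the identity $\eqref{eq21}$: for any $x,y \in Q$, we have $x \cdot xy = x^2 y = e \cdot y = y$. Thus $Q$ satisfies $\eqref{eq21}$, which by the preceding theorem is equivalent to $Q = Q_1$.

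Finally, I would apply the previous corollary: since $Q$ is commutative and $Q = Q_1$, condition $(2)$ there holds, hence condition $(1)$ holds, i.e.\ all parastrophes of $Q$ coincide with $Q$. There is no real obstacle here; the only point to be careful about is to use the right identity (any of $\eqref{eq22}$--$\eqref{eq21}$ would do, for instance $yx\cdot x = yx^2 = y$ for $\eqref{eq24}$), and to cite the corollary rather than redoing the argument by hand.
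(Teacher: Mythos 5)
Your proposal is correct and follows exactly the route the paper intends: a boolean group is commutative and satisfies $x\cdot xy=y$, so by the preceding corollary on commutative quasigroups all parastrophes coincide with $Q$. The paper leaves this corollary without an explicit proof, and your argument is the evident one it relies on.
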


Note finally that identities $\eqref{eq22}-\eqref{eq25}$ can be used to determine some autotopisms of quasigroups \cite{D}.

\small

\footnotesize{\rightline{Received \ November 12, 2015}

Faculty of Pure and Applied Mathematics

 Wroclaw University of Technology

Wyb. Wyspia\'nskiego 27

50-370 Wroclaw, Poland

E-mail: wieslaw.dudek@pwr.edu.pl
  }
\end{document}